\def\teps{\sim_\e}
\def\nn{\nonumber}
\def\a{\alpha} \def\b{\beta} \def\d{\delta} \def\D{\Delta}
\def\e{\varepsilon} \def\f{\phi}   \def\g{\gamma}
\def\z{\zeta} \def\th{\theta}    \def\l{\lambda}
 \def\m{\mu}  \def\p{\pi}
\def\r{\rho}  \def\s{\sigma} 
\def\t{\tau} \def\om{\omega}
\newtheorem{theorem}{Theorem}
\newtheorem{lemma}[theorem]{Lemma}
\newtheorem{Remark}{Remark}
\newcommand{\wh}[1]{\widehat{#1}}
\newcommand{\rdup}[1]{{\left\lceil #1\right\rceil }}
\newcommand{\brac}[1]{\left(#1\right)}
\newcommand{\bfrac}[2]{\left(\frac{#1}{#2}\right)}
\def\cE{{\cal E}}
\newcommand{\set}[1]{\left\{#1\right\}}
\def\sm{\setminus}
\def\es{\emptyset}
\def\E{\mathbb{E}}
\def\Pr{\mathbb{P}}
\newcommand{\ignore}[1]{}
\def\cA{{\mathcal A}}
\def\cB{{\mathcal B}}
\def\cE{{\mathcal E}}
\def\cW{{\mathcal W}}
\newcommand{\beq}[2]{\begin{equation}\label{#1}#2\end{equation}}
\newcommand{\mults}[1]{\begin{multline*}#1\end{multline*}}
\def\nn{\nonumber}
\newcommand{\BD}{\mathrm{BD}}
\newcommand{\DB}{\mathrm{DB}}
\begin{document}

\date{}
\title{The Moran process on a random graph}
\author{Alan Frieze\thanks{Research supported in part by NSF grant DMS1952285 Email: frieze@cmu.edu} and Wesley Pegden\thanks{Research supported in part by NSF grant DMS1700365.Email: wes@math.cmu.edu}\\Department of Mathematical Sciences\\Carnegie Mellon University\\Pittsburgh PA 15213. }
\maketitle
\begin{abstract}
We study the fixation probability for two versions of the Moran process on the random graph $G_{n,p}$ at the threshold for connectivity. The Moran process models the spread of a mutant population in a network. Throughtout the process there are vertices of two types, mutants and non-mutants. Mutants have fitness $s$ and non-mutants have fitness 1. The process starts with a unique individual mutant located at the vertex $v_0$. In the Birth-Death version of the process a random vertex is chosen proportionally to its fitness and then changes the type of a random neighbor to its own. The process continues until the set of mutants $X$ is empty or $[n]$. In the Death-Birth version a uniform random vertex is chosen and then takes the type of a random neighbor, chosen according to fitness. The process again continues until the set of mutants $X$ is empty or $[n]$. The {\em fixation probability} is the probability that the process ends with $X=\emptyset$.

We show that asymptotically correct estimates of the fixation probability depend only on the degree of $v_0$ and its neighbors. In some cases we can provide values for these estimates and in other places we can only provide non-linear recurrences that could be used to compute values.
\end{abstract}
\section{Introduction}
Consider a fixed population of $N$ individuals of types $A$ and $B$, where the relative fitness of individuals of type $B$ is given by a real number $s$.  The classical \emph{Moran process} \cite{Moran} models  a discrete-time process for the fixed-size population where at each step, one individual is chosen for reproduction with probability proportional to its fitness ($s$ for Type B, 1 for Type A), and then replaces an individual chosen uniformly for death. In particular, for the numbers $N_A$ and $N_B$ of individuals of each type, at each step of the process, the probability that $N_B$ increases by 1 and $N_A$ decreases by 1 is
\[
p^+=\bfrac{r N_B}{N_A+r N_B} \bfrac{N_A}{N_A+N_B},
\]
and the probability that $N_B$ decreases by 1 and $N_A$ increases by 1 is
\[
p^-=\bfrac{N_A}{N_A+r N_B} \bfrac{N_B}{N_A+N_B}.
\]
(With probability $1-p^+-p^-$, $N_A$ and $N_B$ remain unchanged.)

This process was generalized to graphs by Liberman, Hauert and Nowak in \cite{iso}, see also \cite{book}.  In this setting, a fixed graph---whose vertices represent the fixed population---has vertex colors that evolve over time representing the two types of individuals.  We will focus on the case where one vertex begins colored as Type B---the mutant type.  In the \emph{Birth-Death} process, a random vertex $v$ is chosen for reproduction, with vertices chosen with probabilities proportional to the fitness of their color type, and then a random neighbor $u$ of $v$ is chosen for death, with the result that $u$ gets recolored with the color of $v$.  In the \emph{Death-Birth} process, a vertex $v$ is chosen uniformly randomly for death, and then a vertex $u$ is chosen randomly from among its neighbors, with probabilities proprotional to the fitness of their respective types, with the result that $v$ is recolored with the color of $u$.

The \emph{isothermal theorem} of \cite{iso} implies that if $G$ is a regular, connected, undirected graph, the fixation probability for Type B---that is, the probability that all vertices will eventually be of Type B---depends only on the number $n$ of vertices in $G$ and the relative fitness $s$, and not, for example on the particular regular graph or the particular choice of starting vertex (more generally, the same holds for doubly stochastic---so called \emph{isothermal}---weighted digraphs).  But it is also observed in \cite{iso} that beyond this special setting, the graph structure can have a dramatic effect on the fixation probability.  In the classical Moran model for a population of size $N$ (equivalent to the graph process when the graph is a complete graph on $N$ vertices, with loops), the fixation probability for Type B is
\[
\frac{1-s^{-1}}{1-s^{-N}}\sim 1-\frac 1 s
\]
(where $a_N\sim b_N$ indicates that $a_N/b_N\to 1$ as $N\to \infty$).
But the fixation probability for an $N$-vertex star graph is, asymptotically in $N$, for $s>1$,
\[
1-\frac 1 {s^2}
\]
(see also \cite{Chalub,Broom}).

For the special case when $s=1$ the fixation probability can be characterized in terms of coalescence times for random walks \cite{coalescence}, but for $s>1$ it is unknown whether a polynomial time algorithm exists to determine the fixation probability given a particular input graph \cite{complexity}; in place of this, heuristic approximations (e.g., see \cite{oana}) or numerical experiments (e.g., \cite{randomtests}) are used to estimate fixation probabilities. It is known however, that the expected {\em absorption time} i.e. the time when either all vertices are mutant, or all vertices are non-mutant, is polynomially bounded. In fact, Goldberg, Lapinskas and Richerby \cite{GLR} prove that the expected absorption time on an $n$-vertex graph is $O(n^{3+o(1)})$, improving results of Diaz, Goldberg, Mertzios, Richerby, Serna and Spirakis \cite{DGMRSS}. Given this, one can estimate the fixation probability for a graph by simply running the process for a sufficient number of times. The paper \cite{GLR} describe a randomized algorithm that estimates the fixation probability with a multiplicative factor of $1\pm\e$ and runs in time $O(n\bar d+\D^2\bar d\e^{-2}\log(\bar d\e^{-1}))$, where $\D$ denotes maximum degree and $\bar d$ denotes average degree.

In place of analyzing fixed graphs, one can analyze the fixation probability for a random graph from some distribution.  For the Erd\H{o}s-R\'enyi random graph $G_{n,p}$, each possible edge among a set of $n$ vertices is included independently, each with probability $p$.  For the case where $0<p<1$ is a constant independent of $n$, Adlam and Nowak leveraged the near-regularity of such graphs (in particular, that they are ``nearly-isothermal'') to show that the fixation probability on such graphs is approximated by that of the classical Moran model. When $p$ is not a constant but ``small'', in the sense that $p=p(n)\to 0$ as $n\to \infty$, $G_{n,p}$ can exhibit significant diversity in vertex degrees, and numerical experiments conducted by Mohamadichamgavi and Miekisz \cite{physics} showed a strong dependence of the fixation probability of the degree of the initial mutant vertex.

In this paper we give a rigorous analysis of fixation probabilities for random graphs with degree heterogeneity.   In particular, our first result concerns $G_{n,p}$ when $p=p(n)=\frac{\log n+\omega(1)}{n}$.  When $\omega(1)$ refers to a slow-growing function, this places our analysis right at the threshold for connectivity of $G_{n,p}$; indeed, in this regime, $G_{n,p}$ is connected w.h.p. ({\em with high probability} i.e. with probability tending to 1 as $n\to \infty$),   and most vertices have degree close to $\log n$, but still there are $\sim \log n$ vertices whose degree is just 1, see for example Frieze and Karo\'nski \cite{FK}.  Also, w.h.p., the automorphism group of the graph is trivial---that is, any two vertices can be distinguished by their relations in the network structure (even if they have the same degree), see for example Erd\H{o}s and R\'enyi \cite{ER1}.  Nevertheless, we prove that the degree of the initial mutant vertex (or possibly one of its neighbors) is enough to asymptotically determine the fixation probability on $G_{n,p}$: the following are defined w.r.t. $G=G_{n,p}$. $d(v)$ denotes the degree of vertex $v$.
\begin{align}
\e&=\frac{1}{\log\log\log n}\nn\\
S_0&=\set{v:d(v)\leq \frac{np}{10}}.\label{S0}\\
S_1&=\set{v:d(v)\notin I_\e=[(1\pm\e)np]}.\label{S1}
\end{align}
\begin{theorem}\label{th1}
Given a graph $G$ and a vertex $v_0$, we let $\f=\phi^{\BD}_{G,v_0,r}$ denote the fixation probability of the Birth-Death process on $G$ when the process is initialized with a mutant at $v_0$ of relative fitness $s>1$.  If $G$ is a random graph sampled according to the distribution $G_{n,p}$, then w.h.p., $G$ has the property that 
\begin{enumerate}[(a)]
\item If $d(v_0)=o(np)$ then $\f=1-o(1)$.
\item Suppose that $v_0\notin S_1$ and $N(v_0)\cap S_0=\es$. Then w.h.p. $\f \sim(s-1)/s$. (This includes the case where $v_0$ is chosen uniformly from $[n]$.
\item Suppose that $v_0\in S_1$. Then $\f$ depends asymptotically only on $d(v_0),s$. 
\item Suppose that $v_0\notin S_1$ and $N(v_0)\cap S_0=\set{y_0}$ holds. Then $\f$ depends asymptotically only on $d(y_0),s$. 
\end{enumerate}
On the other hand, if $s\leq 1$ then $G$ has the property that $\f_{G,v_0}=o(1)$ regardless of $v_0$. 
\end{theorem}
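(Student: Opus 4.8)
\medskip
\noindent\textbf{Proof idea (the case $s\le 1$).}
The plan is to exhibit a bounded supermartingale for the Birth--Death dynamics and invoke the optional stopping theorem. For a mutant set $X\seq[n]$ write $F(X)=s|X|+(n-|X|)$ for the total fitness and put
\[
f(X)=\sum_{v\in X}\frac1{d(v)}.
\]
The first step is to check, by a one-step edge-by-edge computation, that $f(X_t)$ is a supermartingale whenever $s\le 1$. The value of $f$ changes in a step only when the reproducing vertex and the neighbour it overwrites lie on opposite sides of the cut $\partial X=\set{\{u,v\}:v\in X,\,u\notin X}$. Such an edge $\{u,v\}$ with $v\in X$, $u\notin X$ contributes to $\E[f(X_{t+1})-f(X_t)\mid X_t=X]$ the amount
\[
\frac{s}{F(X)}\cdot\frac1{d(v)}\cdot\frac1{d(u)}\;-\;\frac1{F(X)}\cdot\frac1{d(u)}\cdot\frac1{d(v)}\;=\;\frac{s-1}{F(X)\,d(u)\,d(v)},
\]
since with probability $\frac{s}{F(X)d(v)}$ the mutant $v$ reproduces onto $u$ (so $f$ gains $1/d(u)$) and with probability $\frac1{F(X)d(u)}$ the non-mutant $u$ reproduces onto $v$ (so $f$ loses $1/d(v)$). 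Summing over $\partial X$ gives $\E[f(X_{t+1})\mid X_t]\le f(X_t)$ for $s\le1$ (with equality at $s=1$), and $f$ is frozen at the absorbing configurations $\es$ and $[n]$.

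Next I would restrict to the w.h.p.\ event that $G=G_{n,p}$ is connected --- which holds at $p=\frac{\log n+\om(n)}{n}$ --- and that $\sum_v 1/d(v)\rai$; for the latter, Cauchy--Schwarz gives $\sum_v \frac1{d(v)}\ge \frac{n^2}{\sum_v d(v)}=\frac{n^2}{2|E|}$, and $|E|$ is a binomial of mean $\sim\frac12 n^2p$, hence concentrated, so w.h.p.\ $\sum_v 1/d(v)=\Omega(1/p)=\Omega(n/\log n)$. Connectivity implies that from any non-absorbing state there is a path of length at most $n$ and positive probability down to $\es$ (repeatedly let a non-mutant on the boundary overwrite an adjacent mutant), so the absorption time $\tau$ is a.s.\ finite; since $0\le f\le\sum_v 1/d(v)<\infty$, optional stopping for bounded supermartingales applies. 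As $f(X_\tau)=\sum_v 1/d(v)$ on the takeover event $\set{X_\tau=[n]}$ (which has probability $\f_{G,v_0}$) and $f(X_\tau)=0$ on extinction,
\[
\f_{G,v_0}\cdot\sum_v\frac1{d(v)}=\E[f(X_\tau)]\le f(\set{v_0})=\frac1{d(v_0)}\le1,
\]
so $\f_{G,v_0}\le\brac{\sum_v 1/d(v)}^{-1}=O(\log n/n)=o(1)$, with a bound that is completely uniform in $v_0$. This is exactly the asserted property of $G$, and it uses nothing about $G$ beyond connectivity and the crude bound $\sum_v 1/d(v)\rai$.

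I do not expect a genuine obstacle here. The one step with content is spotting the weighting $w_v=1/d(v)$ --- the neutral-drift ``reproductive value'' for Birth--Death updating, which makes $f$ a martingale at $s=1$ and a supermartingale for $s<1$; everything after that is routine. The only things to watch are keeping the fitness normalisation $F(X)$ and the $\pm$ signs straight in the one-step identity, and verifying the hypotheses of optional stopping (a.s.\ finite absorption time and boundedness of $f$), both of which are taken care of by conditioning on the w.h.p.\ event that $G$ is connected with $\sum_v 1/d(v)$ divergent.
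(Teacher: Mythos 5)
Your proposal addresses only the final clause of Theorem~\ref{th1}, namely the statement that $\f_{G,v_0}=o(1)$ when $s\le 1$. For that clause, the argument is correct and is in fact cleaner and more unified than the route taken in the paper. The paper treats $s<1$ by noting that, for $|X|$ large enough, the walk on $|X|$ has a strict leftward bias (appealing back to the random-walk machinery of Section~\ref{pp}), and treats $s=1$ separately by citing Maciejewski's exact formula $\p(v)=d(v)^{-1}/\sum_w d(w)^{-1}$ and then bounding $\max_v\p(v)=O(\log n/n)$. Your observation that $f(X)=\sum_{v\in X}1/d(v)$ is a supermartingale for all $s\le 1$ (with the exact one-step drift $\frac{s-1}{F(X)\,d(u)\,d(v)}$ per cut edge) unifies both cases, and the optional-stopping step yields $\f\le d(v_0)^{-1}\big/\sum_v d(v)^{-1}$ directly --- recovering Maciejewski's equality at $s=1$ as a byproduct and giving a bound that is manifestly uniform in $v_0$. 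The concentration of $|E|$ and the Cauchy--Schwarz (AM--HM) step to lower-bound $\sum_v 1/d(v)$ are routine and correct. One small point worth spelling out if you write this up fully: you need $d(v_0)\ge 1$, which does hold w.h.p.\ by connectivity, but it deserves a sentence.

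The genuine gap is that parts (a)--(d), i.e.\ the entire case $s>1$, are not addressed, and they constitute the bulk of the theorem. Those parts require an argument of a quite different character: showing that once $|X|$ escapes a small window (and with high probability the set $X\cup N(X)$ contains almost no members of $S_1$, Lemmas~\ref{early}--\ref{earlyN}), the drift ratio $p_+/p_-$ is uniformly $\sim s$ (Case BD2), and then a delicate analysis of the initial transient --- where the degree of $v_0$ or of a low-degree neighbour distorts the drift --- via the recurrences in Cases BDF3--BDF5. A potential-function argument in the style of your supermartingale will not by itself deliver the sharp asymptotic $\f\sim(s-1)/s$ in part~(b), nor the claim that $\f$ depends asymptotically only on $d(v_0)$ or $d(v_1)$ in parts (c), (d); those statements are obtained in the paper by reducing to an inhomogeneous one-dimensional random walk and applying Gambler's-Ruin estimates (\eqref{abs}, Lemma~\ref{link}). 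So while the piece you proved is correct and elegant, the proposal as a whole does not prove the theorem.
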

Here, $o(f)$ denotes a function of $n$ whose ratio to the function $f=f(n)$ has a limit of 0 as $n\to \infty$, and so, for example, part (a) asserts that for any functions $f(n),g(n)$ such that that $\lim_{n\to \infty} f(n)/np=0$ and $\lim_{n\to \infty}g(n)=0,$ the probability that $G_{n,p}$ has the property that all of its vertices $v_0$ of degree $\leq f(n)$ have fixation probability $\phi_{v_0}>1-g(n)$ has a limit of 1 as $n\to \infty$.
\begin{theorem}\label{th2}
    Given a graph $G$ and a vertex $v_0$, we let $\phi^{\DB}_{G,v_0,s}$ denote the fixation probability of the Death-Birth process on $G$ when the process is initialized with a mutant at $v_0$ of relative fitness $s>1$.  If $G$ be a random graph sampled according to the distribution $G_{n,p}$, then w.h.p., $G$ has the property that 
\begin{enumerate}[(a)]
\item Suppose that $v_0\notin S_1$ and $N(v_0)\cap S_0=\es$. Then w.h.p. $\f \sim(s-1)/s$. 
\item Suppose that $v_0\in S_1$. Then $\f$ depends asymptotically only on $d(v_0),s$. 
\item Suppose that $v_0\notin S_1$ and $N(v_0)\cap S_0=\set{v_1}$ holds. Then $\f$ depends asymptotically only on $d(v_1),s$. 
\end{enumerate}
On the other hand, if $s\leq 1$ then $G$ has the property that $\f^{\DB}_{G,v_0,s}=o(1)$ regardless of $v_0$.
\end{theorem}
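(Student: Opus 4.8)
The plan is to follow the route used for the Birth--Death process in Theorem~\ref{th1}, with the Death--Birth transition probabilities in place of the Birth--Death ones; throughout, $\f=\phi^{\DB}_{G,v_0,s}$ is the probability that the mutant set reaches $[n]$. I track the process through the statistic $k_t=|X_t|$, and, where convenient, through the weighted statistic $\Phi(X_t)=\sum_{v\in X_t}d(v)$, which for the DB process is an exact martingale when $s=1$ and has positive multiplicative drift when $s>1$. There are two regimes: a \emph{bulk regime}, where $k_t$ behaves like a random walk with upward bias $(1+o(1))s:1$ at every level, so that for $s>1$ fixation becomes essentially certain once $k_t$ passes any slowly growing threshold; and an \emph{early regime}, where $X_t$ is confined to a tree-like neighbourhood of $v_0$ and $k_t$ couples to a birth--death chain whose first few transition ratios are explicit functions of $d(v_0)$ and the degrees of $N(v_0)$ --- this is the only place where the structure of $G$ near $v_0$ enters. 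Throughout I use the w.h.p.\ structural facts about $G=G_{n,p}$ at the connectivity threshold that are established for Theorem~\ref{th1}: $G$ is connected; all but $o(n)$ vertices have degree $(1+o(1))np$; every set of size $o(\sqrt{n/\log n})$ is independent; $S_0$ has size $n^{1-\Omega(1)}$ with no two members within distance $2$ (so every vertex has at most one neighbour in $S_0$, which makes the three cases exhaustive); every vertex $v$ has $(1-o(1))d(v)$ neighbours of degree $(1+o(1))np$, and if $v$ has a neighbour $v_1\in S_0$ then either $v\notin S_1$ or $d(v)\,d(v_1)\gg np$; and $G$ has enough edge-discrepancy that $|N(u)\cap S|=(1+o(1))|S|p$ for the relevant $u$ and all large $S$.

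\emph{The bulk regime and the case $s\le1$.} For $X=X_t$ with $2\le k=|X|\le n-2$, a uniformly chosen death at a non-mutant $u$ with $m(u):=|N(u)\cap X|$ increases $k$ with probability $\tfrac{s\,m(u)}{s\,m(u)+d(u)-m(u)}$, and a death at a mutant $v$ decreases $k$ with probability $\tfrac{d(v)-m(v)}{s\,m(v)+d(v)-m(v)}$; summing over vertices, using concentration of $m(\cdot)$ and the fact that $X_t$ cannot be badly degree-biased (the atypical vertices number only $o(n)$), the ratio of the up- to the down-probability of $k_t$ is $(1+o(1))s$ for every such $k$. Hence for $s>1$, from the first time $k_t$ exceeds a threshold $K=\omega(1)$ it reaches $n$ before $0$ with probability $1-o(1)$ (equivalently $\Phi(X_t)$ reaches $\sum_u d(u)$ by its positive drift), so $\f=(1-o(1))\Pr[\,k_t\text{ reaches }\omega(1)\text{ before }0\,]$. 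For $s\le1$ the same computation gives ratio $\le1+o(1)$ everywhere: when $s<1$, fixing any $s'\in(s,1)$ makes $(1/s')^{k_t}$ a supermartingale for $n$ large, and optional stopping gives $\f\le(s')^{n-1}=o(1)$; when $s=1$ the DB process is the pull voter model, for which $\f$ equals the stationary weight $\pi(v_0)=d(v_0)/\sum_u d(u)=o(1)$ (the $s=1$ characterisation of \cite{coalescence}; also immediate from $\Phi(X_t)$ being a martingale). Either way $\f=o(1)$ for every $v_0$, which is the final assertion of the theorem.

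\emph{The early regime and the local reduction.} While $k_t$ is small and $X_t$ is tree-like, I couple $k_t$ to a birth--death chain whose low-level ratios are explicit in the degrees near $v_0$. The decisive quantity at level $1$ is $\sigma_0:=\sum_{u\sim v_0}\tfrac{s}{s+d(u)-1}$, the rate at which the lone mutant spreads relative to its death rate $1$, so the level-$1$ up-probability is $\sigma_0/(1+\sigma_0)$; the level-$2,3,\dots$ ratios are written down similarly (using that each vertex the cluster reaches is of typical degree w.h.p.) and coincide with the bulk value $s$ as soon as the cluster contains two vertices of typical degree. In case~(a), all neighbours of $v_0$ are typical and none is in $S_0$, so $\sigma_0=(1+o(1))s$ and every early ratio is $(1+o(1))s$: the chain is gambler's ruin with ratio $1/s$ at every level and escape probability $\tfrac{1-s^{-1}}{1-s^{-n}}\to\tfrac{s-1}{s}$, hence $\f\sim\tfrac{s-1}{s}$; this covers a uniformly random $v_0$ since $|S_1|=o(n)$ and only $o(n)$ vertices have a neighbour in $S_0$. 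In case~(b), $v_0\in S_1$: $(1-o(1))d(v_0)$ of its neighbours are typical, the rest are not in $S_0$ (and if $v_0$ has an $S_0$-neighbour $v_1$ then $d(v_0)d(v_1)\gg np$, so that term is negligible), whence $\sigma_0=(1+o(1))\tfrac{s\,d(v_0)}{np}$ and every early ratio is, to within $1+o(1)$, an explicit function of $d(v_0)/np$ and $s$; solving the chain gives $\f=f_{n,p}(d(v_0),s)+o(1)$ with $f$ not depending on the rest of $G$. In case~(c), $v_0\notin S_1$ and $N(v_0)\cap S_0=\{v_1\}$: the typical neighbours of $v_0$ contribute $(1+o(1))s$ to $\sigma_0$, the only additional term is $\tfrac{s}{s+d(v_1)-1}$, and the dynamics through the configurations $\{v_0\},\{v_0,v_1\},\dots$ involve $v_0$ only through its typical degree, so $\f=g_{n,p}(d(v_1),s)+o(1)$.

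\emph{Main obstacle.} The heart of the argument is the early-regime coupling, and in particular maintaining it not only on the first excursion away from $v_0$ but each time $X_t$ contracts back toward $v_0$: the degree of $v_0$ (resp.\ $v_1$) keeps shaping the transition ratios until $X_t$ is large enough that the boundary contributed by that single vertex is swamped, which forces one to control the geometry of $X_t$ near $v_0$ over the whole small-$k_t$ phase rather than merely locally in time. Entangled with this is proving the structural lemmas at exactly the resolution that matches the $(1+o(1))$ error terms --- this is where the cut-offs $np/10$ in \eqref{S0} and $\e=1/\log\log\log n$ in \eqref{S1} are calibrated: $\e$ must shrink fast enough that the expected number of bad local configurations (a low-degree vertex with an atypical neighbour, or a member of $S_1$ having an $S_0$-neighbour $v_1$ with $d(v_1)$ not $\gg np/d(v_0)$) tends to $0$, yet slowly enough that a vertex of degree $\asymp np$ still has $(1-o(1))np$ typical neighbours --- and checking that cases (b) and (c) yield the \emph{same} limiting functions $f_{n,p},g_{n,p}$ over the entire high-probability event, so that the assertion that $\f$ depends asymptotically only on $d(v_0),s$ (resp.\ $d(v_1),s$) is literally correct. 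A further technical nuisance is the intermediate range of $k_t$ (between a power of $\log n$ and $n/\log n$), where $X_t$ is neither guaranteed independent nor close to uniformly mixed in $G$; there one leans on $\Phi(X_t)$ and its unconditional positive drift rather than on concentration of the $m(\cdot)$.
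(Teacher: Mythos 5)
Your overall plan mirrors the paper's: split into a bulk regime where $p_+/p_- \teps s$ (Cases DB2/DB3), an early tree-like regime where transition ratios depend explicitly on the degrees near $v_0$ (Cases DB1a--DB1f), and then a case analysis coupling $|X|$ to a birth--death chain whose ratios are functions of $d(v_0)$ or $d(v_1)$. Your level-$1$ quantity $\sigma_0=\sum_{u\sim v_0}\tfrac{s}{s+d(u)-1}$ exactly reproduces the paper's \eqref{DB1aeq1} with $p_-=1/n$, and the statement that $\sigma_0\teps s\,d(v_0)/np$ matches the paper's $\alpha$ parameter. The use of $\Phi(X_t)=\sum_{v\in X}d(v)$ as an exact martingale at $s=1$ is a nice observation the paper does not make (it instead quotes Maciejewski), though it is equivalent.

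There are two genuine gaps. First, your assertion for $s\le1$ that ``the same computation gives ratio $\le1+o(1)$ everywhere'' is false: if $v_0\in S_1$ with $d(v_0)=\alpha np$ and $\alpha>1$ (permitted up to $\alpha\le5$ by Lemma~\ref{lem1}\eqref{item_max-degree}), then at $|X|=j$ with $v_0$ still in $X$, Case DB1c gives $p_+/p_-\teps s(j-1+\alpha)/j$, which for small $j$ and $s$ close to $1$ exceeds $1$, so $(1/s')^{k_t}$ is not a supermartingale near the origin and optional stopping does not directly give $\f\le(s')^{n-1}$. The paper handles this by observing that $p_+/p_-\le C_2<1$ only once $|X|\ge C_1$ for suitable constants, then arguing that the walk returns to the constant range repeatedly and from there has probability bounded away from $0$ of absorbing at $0$; you would need this (or a restart-at-$C_1$ version of your supermartingale argument) to repair the claim. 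Second, your one-dimensional chain in $k_t$ misses the paper's sub-case DBF3, where $d(v_0)=O(\e^{-2})$: there the quantity $\delta_1=d_X(v_0)\in\{0,1\}$ is a second relevant state variable, since the term $\tfrac{d(v_0)-\delta_1}{s\delta_1+d(v_0)-\delta_1}$ in \eqref{DB3a} changes the down-probability by a constant factor, and the paper needs the two-dimensional recurrence $p_{j,\delta}$ to track it. A chain indexed only by $|X|$ cannot express this; you would need to enlarge the state or argue separately that this sub-case is negligible, which it is not when $d(v_0)$ is bounded.
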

\begin{Remark}
In our proofs we establish recurrence relations that enable us to asymptotically determine the fixation probabiliy in the above cases where it is not explicitly given. Unfortunately, these recurrences are non-linear, although in principle we could obtain numerical results from them.
\end{Remark}

\section{Notation} For a set $S\subseteq[n]$, we let $\bar S=[n]\setminus S$, $e(S)=|\set{vw\in E(G):v,w\in S}|$. If $S,T\subseteq [n],S\cap T=\emptyset$, then $e(S:T)=|\set{vw:v\in S,w\in T}|$. $N(S)=\set{w\notin S:\exists v\in S\ s.t.\ vw\in E(G)}$, where we shorten $N(\set{v})$ to $N(v)$. We let $N_T(S)=N(S)\cap T$ for $T\cap S=\es$. We let $d_S(v)=|N(v)\cap S|$ and $d(v)=d_{[n]}(v)$ and $\D=\max\set{d(v):v\in [n]}$. 

We let 
\[
n_1=n-\frac{n}{(np)^{1/2}}\text{ and }\om_0=\frac{\e^2np}{100\log np}.
\]
We write $A\teps B$ if $A\in (1\pm O(\e))B$ as $n\to\infty$ and $A\lesssim_\e B$ if $A\leq (1-O(\e))B$.
\paragraph{Chernoff Bounds} We use the following inequalities for the Binomial random variable $B(N,p)$:
\begin{align}
\Pr(B(N,p)\leq (1-\th)Np)&\leq e^{-\th^2np/2}\qquad 0\leq \th\leq 1.\label{ch1}\\
\Pr(B(N,p)\geq (1+\th)Np)&\leq e^{-\th^2np/3}\qquad 0\leq \th\leq 1.\label{ch2}\\
\Pr(B(N,p)\geq \l Np)&\leq \bfrac{e}{\l}^{\l Np}\qquad \l>0.\label{ch3}
\end{align}
For a proof of these bounds, see for example \cite{FK}, Part V.
\section{Random Graph Properties}
Assume that $np=O(\log n)$. We will deal with the simpler case of $np/\log n\to \infty$ in Section \ref{np>>logn}. 
\begin{lemma}\label{lem1}
The following hold w.h.p.
\begin{enumerate}[(a)]
\item $\D\leq 5np$.\label{item_max-degree}
\item $|S_1|\leq n^{1-\e^2/4}$.\label{itemsizeS1}
\item For all cycles $C$ of length at most $\om_0$ we have that $C\cap S_1=\emptyset$.\label{itemCcapS1}
\item For all $v,w\in S_0,\,v\neq w$ we have $dist(v,w)\geq \om_0$.\label{itemS0S0dist}
\item For all $v\in S_1$ and all vertices $w\neq v$ such that $d(w)<\om_0$ we have that $dist(v,w)>\om_0$. \label{itemS0S1dist}
\item For all $S\subseteq [n]$ with $|S|\leq 2n/(np)^{9/8}$ we have that $e(S)<10|S|$.\label{e(S)3|S|}
\item For all $S\subseteq [n]$ with $|S|\leq2\om_0$ we have that $e(S)\leq |S|$.\label{iteme(S)1|S|}
\item $\not\exists S\subseteq \bar S_1$ such that \label{iteme(S,T)} 
\begin{enumerate}[(i)]
\item $|S|\in I_{(d)}=[10/\e^3, n_1]$.
\item $e(S:T)\notin (1\pm2\e)|S|\,|T|p$ where $T=\bar S\sm S_1$.
\end{enumerate}
\item For all $S\subseteq [n]$ such that $S$ induces a connected subgraph and such that $\om_0/2\leq |S|\leq n/(np)^{9/8}$ we have that $e(S:\bar S)\geq|S|np/2$.\label{iteme(SSbar)}
\item For all $S\subseteq[n]$ such that $S$ induces a connected subgraph we have that $S\cup N(S)$ contains at most  $\frac{7}{\e^2}\max\set{1,\frac{|S|}{\om_0}}$ members of $S_1\cup N(S_1)$.\label{item7/eps^2}
\item For $S\subseteq [n]$, let $B_k(S)$ be the set of vertices $v\in \bar S$ with $d_S(v)=k$. Then for all $S\subseteq [n]$ such that $|S|\leq n/(np)^{9/8}$ and such that $S$ induces a connected subgraph, we have that $|B_k(S)|\leq \a_k|S|np$ for $2\leq k\leq (np)^{1/3}$,  where $\a_k=\e/k^2$.\label{itemBk}
\item If $n/(np)^2\leq |S|\leq n_1$, then there are at most $\th|S|$ vertices $v\in S$ that $d_{\bar S}(v)\notin (1\pm\e)(n-|S|)p$, where $\th=\frac{1}{e^2(np)^{1/2}}$.\label{itemtheta}
\item There do not exist disjoint sets $S,T\subseteq [n]$ with $n/(np)^{9/8}\leq |S|\leq n/(np)^{1/3}$ and $|T|=\th(n-|S|)$ such that $e(S,T)\geq \a |S||T|p,$ where $\a=(np)^{1/4}$.\label{itemSTD}
\item There do not exist $S\subseteq[n],v\in \bar S$ such that $|S|\leq np$, $S$ induces a connected subgraph and $d_S(v)\geq \e^{-2}\log\log n$.\label{itemSvd}
\end{enumerate}
\end{lemma}
\begin{proof}
We defer the proof of this lemma to Section \ref{A} in an appendix.
\end{proof}

\section{Birth-Death}\label{BirDe}
$X$ denotes the set of mutant vertices and $w(X):=(s-1)|X|+n$. We have 
\begin{align}
p_+=p_+^{BD}(X)=\Pr(|X|\to |X|+1)&=\frac{s}{w(X)}\sum_{v\in X}\frac{d_{\bar X}(v)}{d(v)}.\label{1}\\
p_-^{BD}(X)=\Pr(|X|\to |X|-1)&=\frac{1}{w(X)}\sum_{v\in N(X)}\frac{d_{X}(v)}{d(v)}.\label{2}
\end{align}

\subsection{The size of $(X\cup N(X))\cap S_1$} \label{eii}
An iteration will be a step of the process in which $X$ changes. We prove some lemmas that will be useful in later sections. In the following $Z$ is a model for  $|X|$. 
\begin{lemma}\label{link}
Suppose that $Z=Z_t,t\geq t_0$ is a random walk on $0,1,\ldots,n$ and that we have $t\geq t_0$ implies that $\Pr(Z_{t+1}=Z_t+1)\geq \g$ where $\g>1/2$, as long as $Z_t\geq \r t$, where $\r>0$ is sufficiently small. Suppose that $n$ is large and that $0,n$ are absorbing states. Suppose that for values $a,b>2a$ we have $Z_0=2a\geq 2\r t_0$. Then with probability $1-O(e^{-\Omega(a)})$, $Z$ reaches $b$ before it reaches $a$.
\end{lemma}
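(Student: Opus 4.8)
The plan is to reduce the random walk $Z$ to a genuine biased random walk on the integers by stripping away the complication that the bias $\gamma$ is only guaranteed while $Z_t \ge \rho t$. First I would observe that, because we start at $Z_0 = 2a \ge 2\rho t_0$ and we only care about reaching $b$ before $a$, the walk stays in the window $[a,b]$ during the relevant phase; so the question is whether the constraint $Z_t \ge \rho t$ can be violated before the walk exits $[a,b]$. Here is the key point: while $Z_t \in [a,b]$, each step the walk advances with probability $\ge \gamma > 1/2$ (conditioned on the event $Z_t \ge \rho t$, but we will show that event holds throughout). If the walk were to stay inside $[a,b]$ for a long time $\tau$ without being absorbed, then after $\tau$ steps with drift $\ge 2\gamma - 1 > 0$, a Chernoff/Azuma bound (cf.\ \eqref{ch1}) shows $Z_{t_0+\tau}$ would have to leave $[a,b]$ through the top except with probability $O(e^{-\Omega(\tau)})$; in particular, taking $\tau$ a large multiple of $b$, the walk exits $[a,b]$ in $O(b)$ steps w.h.p. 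But then as long as $\rho$ is small enough that $\rho \cdot (t_0 + O(b)) \le a$ — which holds since $t_0$ is controlled by $t_0 \le a/\rho$ and $b$ is polynomially related to $a$ in the applications — the constraint $Z_t \ge \rho t$ is never active in the window, because $Z_t \ge a \ge \rho t$.

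With the constraint removed, I would couple $Z$ from below by the lazy biased walk $Y$ on $\integers$ that moves $+1$ with probability $\gamma$, $-1$ with probability $p_- \le 1-\gamma$, and stays put otherwise, started at $Y_0 = 2a$, with absorbing barriers at $a$ and $b$. The coupling is the standard monotone one: whenever $Z$ is told to go up, so is $Y$ (and whenever $Z$ goes down, we let $Y$ go down or stay), so $Z_t \ge Y_t$ pointwise until $Y$ is absorbed, hence $Z$ reaches $b$ whenever $Y$ does. For the biased walk, the classical gambler's-ruin computation gives that the probability of reaching the lower barrier $a$ starting from $2a$ with barriers at distance $a$ on each side is
\[
\frac{(\gamma/p_-)^{a} - 1}{(\gamma/p_-)^{2a} - 1} \le \brac{\frac{p_-}{\gamma}}^{a} \le \brac{\frac{1-\gamma}{\gamma}}^{a} = e^{-\Omega(a)},
\]
using $\gamma > 1/2$. (Laziness only rescales time, not the ruin probabilities, since we may condition on the non-lazy steps.) Combining the two estimates: with probability $1 - O(e^{-\Omega(a)})$ the constraint is inactive throughout, and conditioned on that, with probability $1 - O(e^{-\Omega(a)})$ the walk $Z \ge Y$ reaches $b$ before $a$. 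A union bound gives the claim.

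I expect the main obstacle to be the bookkeeping around the time-dependent constraint $Z_t \ge \rho t$: one has to confirm that the number of steps the walk spends in $[a,b]$ before absorption is, with the required exponential probability, small enough (a constant multiple of $b$) that $\rho t$ never catches up to $a$ over that horizon, given the hypotheses $Z_0 = 2a \ge 2\rho t_0$ and $b > 2a$. This is where the requirement that $\rho$ be ``sufficiently small'' is used, and it is the one place where the argument is genuinely not a black-box appeal to gambler's ruin. Once that is nailed down, the coupling and the gambler's-ruin estimate are routine. One should also double-check the edge case where $Z$ could in principle be absorbed at $n$ rather than at $b$ — but since $b \le n$ and absorption at the top of $[a,b]$ means $Z$ reached $b$, this only helps.
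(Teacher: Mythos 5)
Your approach has a genuine gap in the handling of the constraint $Z_t\geq\r t$. You claim that, because the walk exits $[a,b]$ within $O(b)$ steps, the constraint ``is never active in the window,'' and you justify this with the inequality $\r(t_0+O(b))\leq a$. But the hypothesis ``$\r>0$ sufficiently small'' only means $\r<\g-\tfrac12$ (a constant); it does not entitle you to assume $\r\leq a/O(b)$. In the paper's actual application (Lemma \ref{n1}) one has $a=10/\e^3\approx(\log\log\log n)^3$, $b=n_1\approx n$, and $\r=\e\approx 1/\log\log\log n$, so $\r b\gg a$ and the inequality $\r(t_0+O(b))\leq a$ is badly false. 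Also $t_0$ itself may be as large as $a/\r$, so after $O(b)$ further steps the threshold $\r t$ is already far above $a$, and you cannot simply say $Z_t\geq a\geq \r t$.

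The correct way to handle this --- and it is what the paper does --- is not to freeze the threshold at the constant $a$, but to track the walk's position against the \emph{moving} threshold $\r t$. Define $\cE_t$ as the event that $Z$ has made at least $(t-t_0)\s - a/2$ upward moves by time $t$, where $\s=\g/2+1/4$. If $\cE_\t$ holds for all $\t\leq t$, then $Z_t\geq a+(\g-\tfrac12)(t-t_0)$, and since this lower bound increases at rate $\g-\tfrac12>\r$ per step while $\r t$ only increases at rate $\r$, it stays strictly above $\max\set{a,\r t}$ for all $t>t_0$ once it holds at $t_0$ (which it does by the hypothesis $Z_{t_0}=2a\geq 2\r t_0$). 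This makes the bias condition self-sustaining without ever needing $\r t\leq a$. A Chernoff bound then shows $\Pr(\neg\bigcap_\t\cE_\t)\leq e^{-\Omega(a)}$, and when all the $\cE_\t$ hold the walk reaches $b$ by time $t_0+(b-a)/(\g-\tfrac12)$, which gives the claim without any appeal to gambler's ruin. Your coupling with a genuine $\g$-biased walk and the ruin estimate $(\frac{1-\g}{\g})^a$ would be a clean way to finish \emph{if} the constraint could be decoupled, but the decoupling step is precisely where the work is, and your version of it does not survive the regime the lemma is needed for.
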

\begin{proof}
Let $\s=\g/2+1/4$ and let $\cE_t$ be the event that $Z$ makes at least $(t-t_0)\s-a/2$ positive moves at times $t_0+1,\ldots,t$. If $\cE_\t$ occurs for $t_0\leq \t\leq t$ then $Z_t\geq a+2(t-t_0)\s-(t-t_0)=a+(\g-1/2)(t-t_0)>\max\set{a, \r  t}$ for $\r$ sufficiently small. (This is true by assumption for $t=t_0$ and the LHS increases by $\g-1/2>\r$ as $t$ increases by one.) Let $t_1=t_0+(b-a)/(\g-1/2)$. If $\cE=\bigcap_{\t=t_0}^{t_1}\cE_\t$ occurs then $Z_{t_1}\geq b$. The Chernoff bounds imply that 
\[
\Pr(\neg\cE)\leq \sum_{\t=t_0+a/2\s}^{t_1}\Pr(\neg\cE_\t)\leq \sum_{\t=t_0+a/2\s}^{t_1}\exp\set{-\frac{1}{2}\brac{\frac{\g}{2}-\frac{1}{4}}^2\g(\t-t_0)}\leq e^{-\Omega(a)}.
\]
\end{proof}

\begin{lemma}\label{early}
While, $|X|\leq n/(np)^{9/8}$, the probability that $(X\cup N(X))\cap S_1$ increases in an iteration is $O(\e^{-2}/\om_0)$.
\end{lemma}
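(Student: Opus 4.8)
The plan is to bound, for a single iteration in which $X$ changes, the probability that the new vertex brought into $X\cup N(X)$ creates or reveals a vertex of $S_1$ inside $X\cup N(X)$. Write $Y=X\cup N(X)$ for brevity. An iteration changes $X$ by either adding a vertex $u$ to $X$ (a $+$ move) or deleting a vertex $u$ from $X$ (a $-$ move); in either case the only way $Y\cap S_1$ can grow is that the vertex $w$ whose status changes — i.e.\ $w=u$ on a $+$ move, or $w$ the deleted vertex together with possibly some of its former neighbours in $N(X)$ on a $-$ move — lies in $S_1$, or else a new neighbour of $X$ gets pulled into $N(X)$ and that neighbour is in $S_1$. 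The key structural point is that on a $+$ move, the vertex $u$ added to $X$ is a neighbour of the current mutant set, so $u$ and all of its neighbours already lie in the ``boundary-expanded'' set $X\cup N(X)\cup N(N(X))$, and after the move the newly exposed vertices are exactly the previously-unexposed neighbours of $u$. So $Y\cap S_1$ increases in an iteration only if $u$ or one of its neighbours is in $S_1$ — equivalently only if the iteration touches a vertex within distance $1$ of $S_1$.

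First I would set up the bookkeeping precisely: let $\Gamma = S_1\cup N(S_1)$ be the set of vertices within distance $\le 1$ of $S_1$, and observe from the previous paragraph that $Y\cap S_1$ can only increase during an iteration in which the modified vertex $u$ lies in $\Gamma$. Next, since $X$ is connected throughout the Birth-Death process (it starts as a single vertex and only grows by attaching neighbours, and — one must check — deletions from $X$ keep $X$ connected in the relevant regime, or else one argues on $X\cup N(X)$ which is connected regardless), and since $|X|\le n/(np)^{9/8}$, Lemma~\ref{lem1}\eqref{item7/eps^2} applies to $S=X$ and gives $|(X\cup N(X))\cap \Gamma|\le \frac{7}{\e^2}\max\{1,|X|/\om_0\}$. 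In the regime $|X|\le n/(np)^{9/8}$ with $np = O(\log n)$ this bound is $O(\e^{-2}|X|/\om_0)$ once $|X|\ge \om_0$, and $O(\e^{-2})$ when $|X|<\om_0$; in all cases it is $O(\e^{-2}\max\{1,|X|/\om_0\})$.

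Then I would bound the one-step probability that the iteration's active vertex lies in $\Gamma\cap Y$. For a $+$ move the reproducing vertex $v$ is in $X$ and the vertex $u$ being recoloured is a uniform random neighbour of $v$; for a $-$ move the reproducing vertex $v\in N(X)$ is chosen proportional to fitness (here fitness $1$, so among $N(X)$ with weights) and recolours a random neighbour $u\in X$. Conditioned on an iteration occurring, the probability that the recoloured vertex $u$, or its relevant neighbourhood, meets $S_1$ is at most the probability that the chosen edge of the step has an endpoint in $\Gamma$. Using the edge-based description \eqref{1}–\eqref{2} of the transition probabilities and the degree lower bound that holds on $\bar S_0$ (degrees $\ge np/10$), the conditional probability of selecting a vertex of $\Gamma\cap Y$ in a given iteration is $O(|(X\cup N(X))\cap \Gamma| / |X|)$ — the numerator counts the relevant vertices, the denominator is the total ``mass'' driving an iteration, which is $\Theta(|X|)$ up to constants because each mutant contributes $\Theta(1)$ and there are $|X|$ of them, and the normalisation $w(X)=(s-1)|X|+n$ cancels between numerator and denominator when we condition on the iteration changing $X$. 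Combining with the count from the previous paragraph, when $|X|\ge \om_0$ this is $O(\e^{-2}(|X|/\om_0)/|X|)=O(\e^{-2}/\om_0)$, and when $|X|<\om_0$ it is $O(\e^{-2}/|X|)$; a small additional argument (e.g.\ $|X|$ quickly exceeds $\om_0$, or a direct bound while $|X|$ is tiny) handles the transient initial phase so that the bound $O(\e^{-2}/\om_0)$ holds in all iterations considered.

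The main obstacle I anticipate is making the ``mass driving an iteration is $\Theta(|X|)$'' step rigorous and uniform: one must be careful that $\sum_{v\in X} d_{\bar X}(v)/d(v)$ and $\sum_{v\in N(X)} d_X(v)/d(v)$ are both $\Theta(|X|)$ in the relevant regime, using $d(v)\ge np/10$ on $\bar S_0$ together with the fact (from the connectivity/expansion lemmas, in particular Lemma~\ref{lem1}\eqref{iteme(SSbar)} and \eqref{e(S)3|S|}) that $X$ has $\Theta(|X|)$ outgoing edges but also not too many internal edges, so that a constant fraction of $v\in X$ have $d_{\bar X}(v)=\Theta(d(v))$. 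One also has to handle the small-$|X|$ regime, where $d(v)/|X|$ is not $O(1)$ — there the better bound $O(\e^{-2}/|X|)$ from the $\max\{1,\cdot\}$ term does the job, provided one observes $|X|$ reaches $\om_0$ after few iterations (and the increments of $Y\cap S_1$ before that are a.s.\ dominated by a trivial bound). Everything else is assembling the pieces from Lemma~\ref{lem1} and the transition formulas \eqref{1}–\eqref{2}.
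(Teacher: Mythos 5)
Your high-level idea is the right one and broadly matches the paper's: bound the number of ``dangerous'' vertices in $(X\cup N(X))\cap(S_1\cup N(S_1))$ via Lemma~\ref{lem1}\eqref{item7/eps^2} and then compare against the total mass driving an iteration. But as written the argument has two gaps that are not merely bookkeeping, and one of them is fatal to the stated conclusion.

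The first gap is the small-$|X|$ regime, and it is the one your proposal cannot repair by the suggested means. Your bound degrades to $O(\e^{-2}/|X|)$ when $|X|<\om_0$, and you propose to save this by observing that ``$|X|$ quickly exceeds $\om_0$.'' That does not prove the lemma, which asserts the $O(\e^{-2}/\om_0)$ per-iteration bound in \emph{every} iteration, including while $|X|$ is a single digit. The missing idea, which is exactly what the paper uses, is Lemma~\ref{lem1}\eqref{itemS0S1dist}: in order for the chosen $u$ to be in $S_1^+=S_1\cup N(S_1)$, the active vertex $v\in X$ must lie within distance $2$ of $S_1$, so $d(v)\geq\om_0$; if the component $C$ of $v$ in $G_X$ has $|C|\leq\om_0/2$, then $v$ has at least $\om_0/2$ neighbours in $\bar X$, of which at most $O(\e^{-2})$ lie in $S_1^+$ by Lemma~\ref{lem1}\eqref{item7/eps^2}. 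That yields $O(\e^{-2}/\om_0)$ directly, with no appeal to how large $|X|$ has become. Your sketch contains no trace of this observation.

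The second gap concerns the step ``the conditional probability of selecting a vertex of $\Gamma\cap Y$ is $O(|(X\cup N(X))\cap\Gamma|/|X|)$.'' The numerator in the exact expression is a weighted sum $\sum_{v}d_{\bar X\cap S_1^+}(v)/d(v)$ (or, with the paper's conditioning, $/d_{\bar X}(v)$), and the per-vertex terms are only $O(1/np)$ times a degree count when $d(v)=\Omega(np)$. For $v\in X\cap S_0$ the ratio can be $\Theta(1)$, and for vertices with $d_X(v)\ge d(v)/2$ the denominator $d_{\bar X}(v)$ collapses. The paper's decomposition of $C_0$ into $C_0\cap S_0$, $C_1$, and $C_2$, together with Lemma~\ref{lem1}\eqref{itemS0S0dist} and \eqref{e(S)3|S|}, is precisely there to show that each of these deviant populations is sparse enough that their total contribution is still $O(\e^{-2}|C|/\om_0)$. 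Your sketch treats all vertices as contributing $O(1)$ uniformly, which is not justified. Finally, you correctly flag the connectivity problem --- $X$ need not be connected after deletions, so Lemma~\ref{lem1}\eqref{item7/eps^2} cannot be applied to $S=X$ directly --- but you leave it unresolved; the paper fixes it by working component-by-component, and your bound-by-$|X|$ normalisation has to be rephrased accordingly. Addressing these three points would in effect reproduce the paper's argument.
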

\begin{proof}
The probability estimates are conditional on there being a change in $X$. 

Let $S_1^+=S_1\cup N(S_1)$. We consider the addition of a member of $S_1$ to $X\cup N(X)$. This would mean the choice of $v\in X$ and then the choice of a neighbor of $v$ in $S_1^+$. Since $v$ is at distance at most 2 from $S_1$, Lemma \ref{lem1}(\ref{itemS0S1dist}) implies that $d(v)\geq \om_0$. Let $C$ be the component of the graph $G_X$ induced by $X$ that contains $v$. Assume first that $|C|\leq \om_0/2$. Then $v$ has at least $\om_0/2$ neighbors in $\bar X$ and so we see from Lemma \ref{lem1}\eqref{item7/eps^2} that the conditional probability of adding to $S_1\cap X$ is $O(\e^{-2}/\om_0)$. 

Now assume that $\om_0/2<|C|\leq n/(np)^{9/8}$. Let $C_0$ denote $\set{v\in C:d_{\bar X}(v)>0}$. We estimate the probability of adding a vertex in $S_1$ to $X\cup N(X)$, conditional on choosing $v\in C_0$. Now Lemma \ref{lem1}\eqref{e(S)3|S|} implies that $e(C)\leq 10|C|$ and Lemma \ref{lem1}\eqref{iteme(SSbar)} implies that $e(C:\bar C)\geq|C|np/2$. So, very crudely, $|C_0|\geq |C|/10$ by Lemma \ref{lem1} \ref{item_max-degree}. We write
\begin{multline}
  \label{CS0}
  \Pr(|(X\cup N(X))\cap S_1|\text{ increases }\mid \text{chosen vertex is in $C_0$, chosen neighbor is in $\bar X$})\\
 =\frac{1}{|C_0|}\sum_{v\in C_0}\frac{d_{\bar X\cap S_1^+}(v)}{d_{\bar X}(v)}=
 \frac{1}{|C_0|}\brac{\sum_{v\in C_0\cap S_0}\frac{d_{\bar X\cap S_1^+}(v)}{d_{\bar X}(v)}+\sum_{v\in C_1}\frac{d_{\bar X\cap S_1^+}(v)}{d_{\bar X}(v)}+\sum_{v\in C_2}\frac{d_{\bar X\cap S_1^+}(v)}{d_{\bar X}(v)}}
\end{multline}
where
\[
C_1=\set{v\in C_0\sm S_0:d_X(v)\geq d(v)/2}\text{ and }C_2=C_0\sm(S_0\cup C_1).
\]

The first sum in \eqref{CS0} is at most $|C_0\cap S_0|$ and it follows from Lemma \ref{lem1}\eqref{itemS0S0dist} that $|C_0\cap S_0|\leq |C|/(\om_0-1)$. It follows from Lemma \ref{lem1}\eqref{e(S)3|S|} and $d_X(v_0)\geq \log n/40$ that the second sum in \eqref{CS0} is at most $800|C|/np$. As for $C_2$, let $A_1,A_2,\ldots,A_\ell$ be the components of the graph induced by $C_2$. It follows from Lemma \ref{lem1}\eqref{item7/eps^2} that
\mults{
\sum_{v\in C_2}\frac{d_{\bar X\cap S_1^+}(v)}{d_{\bar X}(v)}=\sum_{i=1}^\ell\sum_{v\in A_i}\frac{d_{\bar A_i\cap S_1^+}(v)}{d_{\bar X}(v)}\leq \frac{20}{np}\sum_{i=1}^\ell\sum_{v\in A_i}d_{S_1^+}(v)\\
\leq \frac{20}{np}\sum_{i=1}^\ell\brac{\frac{7}{\e^2}\max\set{1,\frac{|A_i|}{\om_0}}}=O\bfrac{|C_2|}{\e^2\om_0np},
}
and we are done by \eqref{CS0} since $|C_2|\leq |C_0|$.
\end{proof}
We next consider $N(X)\cap S_0$. At any point in the process, we let $\wh X$ denote the set of vertices that have ever been in $X$ up to this point. Note that $\wh X$ induces a connected set.
\begin{lemma}\label{earlyN}
W.h.p., there are no vertices in $S_0$ added to $N(X)$ and no vertices in $N(S_0)$ added to $X$ in the first $\om_0^{3/4}$ iterations. 
\end{lemma}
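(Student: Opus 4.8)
The plan is to track the set $\wh X$ of vertices that have ever been mutant. After $t\le\om_0^{3/4}$ iterations $\wh X$ induces a connected subgraph, contains $v_0$, and has $|\wh X|\le t+1\le\om_0^{3/4}+1$. If some $v\in S_0$ is added to $N(X)$ then some current mutant---hence a vertex of $\wh X$---is adjacent to $v$; and if some $w\in N(S_0)$ is added to $X$ then $w\in\wh X$ is adjacent to some $v\in S_0$. In either case $\wh X$ comes within distance $1$ of a vertex of $S_0$, so it suffices to bound the probability that this happens during the first $\om_0^{3/4}$ iterations. Since $\wh X$ stays within distance $\om_0^{3/4}<\om_0/2$ of $v_0$ throughout this window, only an $S_0$-vertex at distance $<\om_0/2$ from $v_0$ can ever lie within distance $1$ of $\wh X$, and by Lemma~\ref{lem1}(\ref{itemS0S0dist}) there is at most one such vertex; call it $v^*$ (if there is none the claim is immediate). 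In the relevant regime $v_0\notin N(S_0)$, so $d^*:=dist(v_0,v^*)\ge 2$; the case $v_0\sim v^*$ of part~(d) of Theorem~\ref{th1} is handled the same way, tracking the addition of $v^*$ itself to $X$, and is strictly easier.

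The key structural point is that if $\tau$ is the first iteration at which $\wh X$ meets $N(v^*)$, the vertex it enters is forced to be a single graph-determined vertex $c^*$, namely the neighbour of $v^*$ on a shortest $v_0$--$v^*$ path. Indeed, since $v^*\in S_0\seq S_1$ lies on no cycle of length at most $\om_0$ (Lemma~\ref{lem1}(\ref{itemCcapS1})), any path in $G-v^*$ joining two distinct neighbours of $v^*$ has length at least $\om_0-1$. Because $\wh X_\tau$ contains $v_0$ and a neighbour $w$ of $v^*$, we have $d^*\le\om_0^{3/4}+1$, so $c^*$ lies at distance $d^*-1\le\om_0^{3/4}$ from $v_0$ in $G-v^*$ (the first $d^*-1$ edges of a shortest $v_0$--$v^*$ path avoid $v^*$), while every other neighbour of $v^*$ is at distance at least $\om_0-\om_0^{3/4}-1$ from $v_0$ in $G-v^*$. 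On the other hand $v^*\notin\wh X_\tau$ (a vertex enters $X$ only from $N(X)$, so $v^*\in\wh X$ would require an earlier incursion into $N(v^*)$), so $\wh X_\tau$ induces a connected subgraph of $G-v^*$; hence $dist_{G-v^*}(v_0,w)\le|\wh X_\tau|-1\le\om_0^{3/4}$, which forces $w=c^*$. The same estimate shows $c^*$ is unique.

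It remains to bound, for each $t\le\om_0^{3/4}$, the conditional probability that $c^*$ is added to $X$ at iteration $t$ given that no incursion into $N(v^*)$ has occurred yet. Under this conditioning no vertex of $S_0$ has ever been a mutant: such a vertex would lie within $\om_0/2$ of $v_0$, hence equal $v^*$ by Lemma~\ref{lem1}(\ref{itemS0S0dist}), and $v^*$ becoming a mutant requires a prior mutant neighbour of $v^*$. Thus every $u\in X$ has $d(u)>np/10$, and since $|X|\le\om_0^{3/4}+1\le 2\om_0$, Lemma~\ref{lem1}(\ref{iteme(S)1|S|}) gives $e(X)\le|X|$, so by \eqref{1}
\[
\sum_{u\in X}\frac{d_{\bar X}(u)}{d(u)}=|X|-\sum_{u\in X}\frac{d_{X}(u)}{d(u)}\ \ge\ |X|-\frac{20\,e(X)}{np}\ \ge\ \frac{|X|}{2}.
\]
Adding $c^*$ requires selecting some $u\in X\cap N(c^*)$ for reproduction and then selecting $c^*$ among its non-mutant neighbours, which, conditional on $|X|$ increasing, has probability
\[
\frac{\sum_{u\in X\cap N(c^*)}1/d(u)}{\sum_{u\in X}d_{\bar X}(u)/d(u)}\ \le\ \frac{(10/np)\,|X|}{|X|/2}\ =\ \frac{20}{np}.
\]
Summing over $t\le\om_0^{3/4}$ and using $\om_0\le np$, the probability of any incursion in the window is at most $20\om_0^{3/4}/np\le 20(np)^{-1/4}=o(1)$; combined with the w.h.p.\ properties of Lemma~\ref{lem1} this proves the lemma.

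I expect the crux to be the structural claim of the second paragraph: without knowing that the first incursion into $N(v^*)$ must occur at the single vertex $c^*$, the ``target set'' $N(v^*)$ has up to $np/10$ members and a union bound over it is hopeless, so one genuinely needs the scarcity of short cycles through $S_1$ (Lemma~\ref{lem1}(\ref{itemCcapS1})). A secondary subtlety is that the per-step bound $20/np$ relies on $d(u)>np/10$ for current mutants, which in turn holds only while no incursion has occurred; this forces the estimate to be organised as an induction on $t$ (equivalently, a union bound over the at most $\om_0^{3/4}$ iterations) rather than as an unconditional one-step bound.
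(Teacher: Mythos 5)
Your proof is correct, and it uses the same random-graph properties as the paper's: Lemma \ref{lem1}(\ref{itemS0S0dist}), (\ref{itemS0S1dist}) to pin down a unique low-degree vertex $v^*$ near $v_0$, (\ref{itemCcapS1}) to exploit the absence of short cycles through $v^*$, and (\ref{iteme(S)1|S|}) for the denominator in the per-step estimate. What differs is the organisation, and yours is the tighter, more unified one. The paper treats the two conclusions of the lemma separately, rules out several configurations by the structural lemmas, and bounds a residual case via the degree $d(v)\ge\om_0$ of the reproducing vertex, giving a per-iteration bound $O(1/\om_0)$. You instead collapse both conclusions into the single event ``$\wh X$ meets $N(v^*)$,'' prove via the girth argument that the first such incursion is forced to occur at the single graph-determined vertex $c^*$, and bound the per-iteration probability of adding $c^*$ by $O(1/np)$ using that every current mutant has degree $>np/10$ (which holds under the very conditioning that no incursion has yet occurred). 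Both give $o(1)$; the paper's uniqueness of the ``bad'' neighbour is implicit and per reproducing vertex, whereas yours is explicit and global, which makes the argument easier to audit. The one thing you leave to a remark is the case $v_0\sim v^*$, which you correctly flag as easier (there only $v_0$ can trigger an incursion, and $d(v_0)\ge\om_0$ gives a per-step bound $O(1/\om_0)$), but a full write-up should spell it out; and, like the paper, your argument tacitly excludes the degenerate case $v_0\in S_0$, where the statement as written cannot hold.
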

\begin{proof}
Suppose that a member of $S_0$ is added to $N(X)$ because we choose $v\in X$ and then add $u\in N(v)$ to $X$, and $N(u)\cap S_0\neq \es$. It follows from Lemma \ref{lem1}(\ref{itemS0S0dist},\ref{itemS0S1dist}) that $d(v)\geq \om_0$ and the choice of $u$ is unique. So the conditional probability of this happening is $O(\om_0^{3/4}/\om_0)=o(1)$, that is conditional on there being a change in $X$. 

Suppose that we choose a $v\in X$ and add a neighbor $w\in N(S_0)$ of $v$ to $X$.  We estimate the conditional probability of this. Lemma \ref{lem1}\eqref{itemS0S0dist} rules out $v\in X$. Now $v$ cannot have two distinct such neighbors $w_1,w_2$.  Otherwise we violate Lemma \ref{lem1}\eqref{itemCcapS1} or \eqref{itemS0S0dist}. Lemma \ref{lem1}\eqref{itemS0S1dist} implies that $v$ has degree at least $\om_0$ and so the probability of choosing the unique $w$ is $O(\om_0^{3/4}/\om_0)=o(1)$.
\end{proof}
\begin{Remark}\label{rem1}
We will see in the next section that w.h.p. the size of $|X|$ follows a random walk with a positive drift in the increasing direction. It follows from this that to deal with cases where $0<|X|\leq \om$ for some $\om\leq\om_0^{1/2}$, we can assume that there will be have been at most $O(\om\log\om)$ iterations to this point. More precisely we can use the Chernoff bounds as we did in Lemma \ref{link} to argue that if $|X|$ is not absorbed at 0 then $|X|$ will reach $\om$ in $O(\om\log\om)$ iterations. Thus, given $|X|=\om$ at some point in the process we have $|\wh X|=O(\om\log\om)$.
\end{Remark}
\subsection{$p_+$ versus $p_-$}\label{pp}
In this section we bound the ratio $p_+/p_-$ for various values of $|X|$. We will see later when we analyse the cases in Section \ref{fix} that if $|X|\leq 20/\e^3$ then we only need to consider cases where $|X\cap S_1|,|N(X)\cap S_0|\leq 1$. This will in turn follow from the results of Section \ref{eii}. 

{\bf Case BD1:\; $|X|\leq 20/\e^3$ and $|X\cap S_1|,|N(X)\cap S_0|\leq 1$.}\\
Let $X_1=X\cap S_1$ and let $Y_1=N(X)\cap S_1$, $Y_0=N(X)\cap S_0$. Either $X_1=\set{x_1}$ or $X_1=\es$ and either $Y_0=\es$ or $Y_0=\set{y_0}$.  Let $X^{(i)}$ denote a connected component of $X$.

Because $|X^{(i)}|$ is small and $X^{(i)}$ induces a connected subgraph, Lemma \ref{lem1}(\ref{iteme(S)1|S|}) implies that $X^{(i)}$ induces a tree or a unicyclic subgraph. Let $\d^{(i)}_T$ be the indicator for $X^{(i)}$ inducing a tree and let $\d_T=\sum_i \d^{(i)}_T$.

Note that the number of edges inside $X$ is precisely $|X|-\delta_T\geq 0$, and the number of edges inside $X$ that are not incident to $X_1$ is precisely $|X|-\delta_T-d_X(X_1)\geq 0$.

Thus we have from \eqref{1} that
\begin{align*}
p_+&\leq \frac{s}{w(X)}\brac{\frac{(1+\e)np(|X|-|X_1|)-\big(2(|X|-\d_T-d_X(X_1))\big)}{(1-\e)np}+\frac{d_{\bar X}(X_1)}{d(X_1)}}.\\
p_+&\geq \frac{s}{w(X)}\brac{\frac{(1-\e)np(|X|-|X_1|)-\big(2(|X|-\d_T-d_X(X_1))\big)}{(1+\e)np}+\frac{d_{\bar X}(X_1)}{d(X_1)}}.
\end{align*}
We can simplify this as follows. If $|X|>|X_1|$, then $|X|-\d_T-d_X(X_1)=o(np)=o(np(|X|-|X_1|)$.  On other hand, if $|X|=|X_1|=1$, then $\d_T=1$ and $d_X(X_1)=0$.  Thus in fact we have 
\begin{align}
  p_+&\leq \frac{s}{w(X)}\brac{(1+3\e)(|X|-|X_1|)+\frac{d_{\bar X}(X_1)}{d(X_1)}}.\label{xx1}\\
p_+&\geq \frac{s}{w(X)}\brac{(1-3\e)(|X|-|X_1|)+\frac{d_{\bar X}(X_1)}{d(X_1)}}.\label{xx2}
\end{align}
We see from \eqref{2} that
\begin{align*}
p_-&\leq \frac{1}{w(X)}\brac{\frac{(1+\e)np(|X|-|X_1|)+d_{\bar X\setminus S_1}(X_1)}{(1-\e)np}+\frac{|Y_1\sm Y_0|}{np/10}+\frac{d_X(Y_0)}{d(Y_0)}}\nn\\
p_-&\geq \frac{1}{w(X)}\brac{\frac{(1-\e)np(|X|-|X_1|)-1-|X||N(X\sm X_1)\cap S_1|+d_{\bar X\setminus S_1}(X_1)}{(1+\e)np}+\frac{|Y_1\sm Y_0|}{5np}+\frac{d_X(Y_0)}{d(Y_0)}}\nn\\
\end{align*}
The -1 in the lower bound for $p_-$ accounts for vertices in $N(X)$ having more than one neighbor in $X$. It turns out from Lemma \ref{lem1}\eqref{iteme(S)1|S|} that there can be at most one such vertex and this will have two neighbors in $X$. Also $|X||N(X\sm X_1)\cap S_1|\leq |X||N(\wh X\sm X_1)\cap S_1|=O(|X|\e^{-3}\log1/\e)$ is a crude upper bound on the number of edges between $X\sm X_1$ and $N(X\sm X_1)\cap S_1$, see Remark \ref{rem1} and Lemma \ref{lem1}(\ref{item7/eps^2}). Because $np\gg \e^{-4}\log1/\e$ we can absorb the $|X||N(X\sm X_1)\cap S_1|$ into an error term.  (When $X=X_1$ this term goes away regardless.)  A similar application of Remark \ref{rem1} and Lemma \ref{lem1}(\ref{item7/eps^2}) also implies that $|Y_1\sm Y_0|/np$ is $o(\e|X|)$.  This can clearly be absorbed into the error term from $|X|>1$. When $|X|=1$ it's contribution after dividing by $w(X)$ will be $o(p_+)$ and it can be ignored. Thus we write
\begin{align}
p_-&\leq \frac{1}{w(X)}\brac{\frac{(1+\e)np(|X|-|X_1|)+d_{\bar X\setminus S_1}(X_1)}{(1-\e)np}+\frac{d_X(Y_0)}{d(Y_0)}}.\label{xx3}\\
p_--&\geq \frac{1}{w(X)}\brac{\frac{(1-\e)np(|X|-|X_1|)+d_{\bar X\setminus S_1}(X_1)}{(1+\e)np}+\frac{d_X(Y_0)}{d(Y_0)}}.\label{xx4}
\end{align} 
We now use \eqref{xx1} -- \eqref{xx4} to estimate $p_+/p_-$ in various cases.

{\bf Case BD1a: $X_1=Y_0=\es$.}\\
In this case equations \eqref{xx1}, \eqref{xx2} and $\e n p \gg 1$ imply that 
\[
\frac{s(1-3\e)|X|}{w(X)}\leq p_+\leq \frac{s(1+3\e)|X|}{w(X)}.
\]
Equations \eqref{xx3}, \eqref{xx4} imply that 
\[
\frac{(1-3\e)|X|}{w(X)}\leq p_-\leq \frac{(1+3\e)|X|}{w(X)}.
\]
So we have
\beq{y1}{
  \frac{p_+}{p_-}\teps s.
}
{\bf Case BD1b: $X_1=\set{x_1},\,Y_0=\es$ and ($|X|>1$ or $d(x_1)=\Omega(np)$).}\\
If $|X|>1$ then equations \eqref{xx3}, \eqref{xx4} imply that 
\beq{bb2}{
 \frac{(1-3\e)(|X|-1)+d(x_1)/np}{w(X)}\leq p_-\leq \frac{(1+3\e)(|X|-1)+d(x_1)/np}{w(X)}.
}
We then have, with the aid of \eqref{xx1} and \eqref{xx2} and the fact that $d(x_1)=\Omega(np)$ implies $d_{\bar X}(x_1)\teps d(x_1)$ that 
\begin{equation}\label{onev0ratio}
\frac{p_+}{p_-}\teps s\left(\frac{|X|}{|X|-1+d(x_1)/np}\right).
\end{equation}
If $|X|=1$ then $p_+=\frac{s}{w(X)}$ and $p_-\teps\frac{d(x_1)}{w(X)np}$ and so \eqref{onev0ratio} holds in this case too. 

{\bf Case BD1c: $X_1=\set{x_1},\,Y_0=\es$ and $|X|=1$ and $d(x_1)=o(np)$.}\\
We have $p_+=s/w(X)$ and \eqref{xx3}, \eqref{xx4} imply that $p_-=o(1/w(X))$. So, in this case, 
\beq{onp}{
\frac{p_+}{p_-}\teps\frac{np}{d(x_1)}\to\infty.
}
{\bf Case BD1d: $X_1=\es$ and $N(X)\cap S_0=\set{y_0}$.}\\
In this case equations \eqref{xx1}, \eqref{xx2} imply that
\beq{bb3}{
 \frac{s(1-3\e)|X|}{w(X)}\leq p_+\leq \frac{s(1+3\e)|X|}{w(X)}.
}
We have $d_X(y_0)=1$. To see this observe that $\wh X$ defined in Remark \ref{rem1} will be a connected set of size $o(\om_0)$ and so Lemma \ref{lem1}(\ref{itemCcapS1}) implies that $d_X(y_0)=1$

Equations \eqref{xx3}, \eqref{xx4} imply that 
\beq{bb4}{
\frac{1}{w(X)}\brac{(1-3\e)|X|+\frac{1}{d(y_0)}}\leq p_-\leq \frac{1}{w(X)}\brac{(1+3\e)|X|+\frac{1}{d(y_0)}}.
}
So, in this case, 
\beq{onpx}{
\frac{p_+}{p_-}\teps \frac{s}{1+\frac{1}{d(y_0)|X|}}.
}
{\bf Case BD1e: $X_1=\set{x_1}$ and $N(X)\cap S_0=\set{y_0}$.}\\
If $|X|=1$ then $p_+=\frac{s}{w(X)}$ and  $p_-\teps\frac{d(x_1)}{w(X)np}$. If $|X|>1$ then Lemma \ref{earlyN} implies that $x_1=v_0$ w.h.p. Lemma \ref{lem1}\eqref{itemS0S0dist} implies that $x_1\notin S_0$ and so $d(x_1)\geq np/10$. This means that  $d_{\bar X}(X_1)/d(X_1)\geq 1-200/\e^2np$ giving
\[
\frac{s}{w(X)}\brac{(1-3\e){X}}\leq p_+\leq \frac{s}{w(X)}\brac{(1+3\e){X}}. 
\]
Equation \eqref{bb2} is replaced by 
\beq{bb2x}{
 \frac{(1-3\e)(|X|-1)+d(x_1)/np+1/d(y_0)}{w(X)}\leq p_-\leq \frac{(1+3\e)(|X|-1)+d(x_1)/np+1/d(y_0)}{w(X)}.
}
But Lemma \ref{lem1}\eqref{itemS0S1dist} implies that $d(y_0)\geq \om_0$ and the term $1/d(y_0)$ is absorbed into error terms. Note that $x_1=v_0\notin S_0$ and so $d(x_1)/np\geq 1/10$. So \eqref{onev0ratio} holds.

{\bf Case BD2:\;$|X|\in I_1=[20/\e^3, n_1=n-n/(np)^{1/2}]$ and $|X|\geq \e t$ where $t$ denotes the iteration number and
\beq{dam}{
|(X\cup N(X))\cap S_1|\leq \begin{cases}1&|X|\leq \om_0^{1/2}.\\O(\e^{-2}|X|/\om_0)&\om_0^{1/2}\leq |X|\leq n/(np)^{9/8}.\end{cases} 
}}
It follows from Lemma \ref{lem1}(\ref{iteme(S,T)}),(\ref{item7/eps^2}) that
\begin{align}
e(X:\bar X)\geq \min\set{e(X\sm S_1:\bar X),e(X,\bar X\sm S_1)}&\geq e(X\sm S_1:\bar X\sm S_1)\nn\\
&\geq (1-2\e)|X\sm S_1|\,|\bar X\sm S_1|p\nn\\
&\geq (1-2\e)\brac{|X|-O\bfrac{t}{\e\om_0}}\brac{|\bar X|-|S_1|}p\label{19}\\
&\geq (1-3\e)|X|\,|\bar X|p.\label{20}\\
\noalign{\text{and similarly}}
e(X:\bar X)&\leq (1+3\e)|X|\,|\bar X|p.\nn
\end{align} 
Note that to go from \eqref{19} to \eqref{20} we use $|\bar X|\gg |S_1|$ from Lemma \ref{lem1}\eqref{itemsizeS1} and the assumption that $|X|\geq \e t$.

Now, 
\begin{align}
p_+&\geq \frac{s}{w(X)}\cdot\frac{e(X\setminus S_1:\bar X)}{(1+\e)np}\geq  \frac{s(1-5\e)|X|(n-|X|)}{nw(X)}.\label{1a}\\
p_+&\leq \frac{s}{w(X)}\brac{\frac{e(X\sm S_1:\bar X)}{(1-\e)np}+|X\cap S_1|}\leq \frac{s(1+5\e)|X|(n-|X|)}{nw(X)}.\label{1b}
\end{align}
When $|X|\leq n/(np)^{9/8}$ we use \eqref{dam}. For larger $X$ we have from Lemma \ref{lem1}\eqref{itemsizeS1} that $|X|(n-|X|)\geq \e^{-2}n|S_1|$. 

On the other hand,
\begin{align}
p_-&\geq \frac{1}{w(X)}\cdot \frac{e(X:\bar X\sm S_1)}{(1+\e)np}\geq \frac{(1-5\e)|X|(n-|X|)}{nw(X)},\label{2a}\\
p_-&\leq \frac{1}{w(X)}\brac{ \frac{e(X:\bar X\sm S_1)}{(1-\e)np}+|N(X)\cap S_1|}\leq \frac{(1+5\e)|X|(n-|X|)}{nw(X)}. \label{2b}
\end{align}
When $|X|\leq n/(np)^{9/8}$ we use \eqref{dam}.  For larger $X$ we again use $|X|(n-|X|)\geq \e^{-2}n|S_1|$. 

We see from \eqref{1a} -- \eqref{2b} that
\begin{align}
\frac{p_+}{p_-}&\geq\frac{s(1-4\e)|X|(n-|X|)}{nw(X)}\cdot \frac{nw(X)}{(1+5\e)|X|(n-|X|)}\geq (1-10\e)s.\label{3a}\\
\frac{p_+}{p_-}&\leq\frac{s(1+5\e)|X|(n-|X|)}{nw(X)}\cdot \frac{nw(X)}{(1-4\e)e(X:\bar X)}\leq (1+10\e)s.\label{3b}
\end{align}
and so
\begin{equation}\label{BD2}
  \frac{p_+}{p_-}\teps s.
\end{equation}
{\bf Case BD3: $|X|\geq n_1$}.\\
We have, very crudely, that w.h.p.,
\[
\frac{s|N(X)|}{5npw(X)}\leq p_+\leq \frac{s|N(X)|}{w(X)}\text{ and }\frac{|N(X)|}{5npw(X)}\leq p_-\leq \frac{|N(X)|}{w(X)}.
\]
So, 
\beq{100np}{
\frac{p_+}{p_-}\geq \frac{1}{5np}.
}
\subsection{Fixation probability -- Proof of Theorem \ref{th1}}\label{fix}
In this section we use the results of Section \ref{pp} to determine the asymptotic fixation problem, for various starting situations.
\subsubsection{Case analysis}
First recall the following basic result on random walk i.e. {\em Gambler's Ruin}: we consider a random walk $Z_0,Z_1,\ldots,$ on $A=\set{0,1,\ldots,m}$. Suppose that $Z_0=z_0>0$ and that if $Z_t=x>0$ then $\Pr(Z_{t+1}=x-1)=\b$ and $\Pr(Z_{t+1}=x+1)=\a=1-\b$. We assume that $0,z_1>z_0$ are absorbing states and that $\a>\b$. Let $\f$ denote the probability that the walk is ultimately absorbed at 0. Then, see Feller \cite{Fell} XIV.2,
\beq{abs}{
\f=\frac{(\b/\a)^{z_0}-(\b/\a)^{z_1}}{1-(\b/\a)^{z_1}}.
}
Feller also proves that if $D$ denotes the expected duration of the game then
\beq{dur}{
D=\frac{m}{\a-\b}\cdot\frac{1-(\b/\a)^a}{1-(\b/\a)^m}+\frac{a}{\a-\b}=O(m).
}

We next argue the following:
\begin{lemma}\label{n0}
W.h.p. either $X$ becomes empty or $|X|$ reaches size $\om=20/\e^3$ within $O(\e^{-3})$ iterations. 
\end{lemma}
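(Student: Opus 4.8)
The plan is to analyze the early behavior of $|X|$ when it starts at $1$ and show that, before $|X|$ can grow large enough for the structural complications (multiple $S_1$-vertices, multiple $S_0$-neighbors) to appear, it behaves like a biased random walk with upward drift bounded away from $1/2$, so that Gambler's Ruin pins down the relevant probabilities and durations. Concretely, I would argue that as long as $1 \le |X| \le \om = 20/\e^3$, Lemmas \ref{early} and \ref{earlyN} together with Remark \ref{rem1} guarantee that w.h.p.\ we stay inside one of the cases BD1a--BD1e analyzed in Section \ref{pp}: the number of iterations to reach size $\om$ (if not absorbed) is $O(\om\log\om) = O(\e^{-3}\log(1/\e))$, and within that many iterations Lemma \ref{earlyN} says no new $S_0$-vertex enters $N(X)$ and no new $N(S_0)$-vertex enters $X$, while Lemma \ref{early} says $(X\cup N(X))\cap S_1$ does not grow (the probability of an increase is $O(\e^{-2}/\om_0)$ per iteration, and $O(\e^{-3}\log(1/\e))\cdot O(\e^{-2}/\om_0) = o(1)$ since $\om_0 = \Theta(np/\log np)$ with $np \ge \log n$). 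Hence throughout this initial phase we have $|X\cap S_1|\le 1$ and $|N(X)\cap S_0|\le 1$, matching the hypotheses of Case BD1.

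**Next I would** invoke the case-by-case bounds on $p_+/p_-$ from Section \ref{pp}. In every subcase BD1a--BD1e, either $p_+/p_- \teps s$ (with $s>1$ fixed, so the ratio is bounded away from $1$), or $p_+/p_- \to \infty$ (cases BD1c, and BD1b/BD1e when $|X|=1$ with $d(x_1) = o(np)$), or $p_+/p_- \teps s/(1 + 1/(d(y_0)|X|))$ which, since $d(y_0) \ge 1$, is still bounded below by something exceeding $1$ once $\e$ is small — actually one must be slightly careful when $|X| = 1$ and $d(y_0) = 1$, where the ratio is $\teps s/2$, still $>1$ for $s > 2$; for $1 < s \le 2$ one notes $|X|$ reaches $2$ quickly and the term improves, or one simply absorbs this into the statement that the walk has bounded-below upward bias. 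In all cases there is a constant $\g = \g(s,\e) > 1/2$ (uniform over $1 \le |X| \le \om$ and over the bounded number of iterations in this phase, using that $d(v)/d(v) \ge$ lower bounds on the degree-ratio terms from Lemma \ref{earlyN} and Lemma \ref{lem1}) such that $\Pr(|X| \to |X|+1 \mid |X| \text{ changes}) \ge \g$. Feller's duration formula \eqref{dur}, applied to the Gambler's Ruin walk on $\{0,1,\dots,\om\}$ with down-step probability $\b \le 1-\g$ and up-step probability $\a \ge \g$, gives expected duration $O(\om/(\a-\b)) = O(\e^{-3})$; a Chernoff/martingale tail bound (as in the proof of Lemma \ref{link}) then upgrades this to: w.h.p.\ the walk is absorbed at $0$ or reaches $\om$ within $O(\e^{-3})$ iterations.

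**The one subtlety** — and the step I expect to be the main obstacle — is the bookkeeping that keeps the structural hypotheses of Case BD1 valid throughout. The cases BD1b/BD1c/BD1e involve $x_1 = v_0 \in S_1$, and their validity rests on $N(v_0) \cap S_0$ having size at most $1$; similarly BD1d/BD1e rest on $|N(X)\cap S_0| \le 1$ and $d_X(y_0) = 1$. These are exactly what Lemma \ref{earlyN} (no new $S_0$ in $N(X)$, no new $N(S_0)$ in $X$) and Lemma \ref{lem1}\eqref{itemCcapS1} (via $\wh X$ connected of size $o(\om_0)$, forcing $d_X(y_0)=1$) deliver, but one must check that the number of iterations really is $O(\om\log\om) = o(\om_0^{3/4})$ so that Lemma \ref{earlyN} applies — this follows from Remark \ref{rem1}, which itself relies on the positive drift we are establishing, so the argument is mildly circular and must be unwound: one first uses the crude lower bound on $p_+/p_-$ valid in *all* of BD1 (namely $p_+/p_- \ge c$ for some constant $c>1$ depending on $s$, even in the worst subcase) to get the $O(\om\log\om)$ iteration bound unconditionally, *then* invokes Lemmas \ref{early} and \ref{earlyN} to confirm we never left Case BD1, and *then* reads off the sharper drift. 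I would present this as: first establish the $O(\e^{-3})$ iteration bound conditional on remaining in Case BD1; then show remaining in Case BD1 fails with probability $o(1)$ over this window; conclude.
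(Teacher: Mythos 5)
Your argument rests on the claim that in every BD1 subcase the ratio $p_+/p_-$ is bounded below by a constant $c(s)>1$, so that the walk has uniform upward drift. This is not true. In Case BD1b, equation \eqref{onev0ratio} gives $p_+/p_-\teps s|X|/(|X|-1+d(x_1)/np)$, and by Lemma \ref{lem1}\eqref{item_max-degree} the initial vertex $v_0=x_1\in S_1$ can have degree up to $5np$. When $d(v_0)/np>s$ and $|X|<(d(v_0)/np-1)/(s-1)$ this ratio is strictly below $1$, so the walk drifts \emph{downward}. Your catalogue of subcases places BD1b only in the ``$\teps s$'' or ``$\to\infty$'' bins (the latter is really BD1c, i.e.\ $d(x_1)=o(np)$), so the high-degree-$v_0$ regime is missed entirely. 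Feller's duration formula \eqref{dur} and the Chernoff tail upgrade you invoke both require the up-step probability to exceed $1/2$ by a fixed margin throughout $\set{1,\ldots,\om}$, which is exactly what fails here.

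The lemma does remain true: a downward-drifting walk is absorbed at $0$ quickly, which is one of the two allowed outcomes, and the downward regime is transient in any case, since it persists only while $v_0\in X$ and $|X|\le (d(v_0)/np-1)/(s-1)=O(1)$, and each decrement of $|X|$ removes $v_0$ from $X$ with probability $\Omega(1)$ (cf.\ \eqref{losingv0}), after which the drift is $\teps s$. But your plan as written feeds everything into a single Gambler's-Ruin duration bound and cannot accommodate a regime with drift below $1/2$; it would need to be split across the two regimes. For comparison, the paper's own proof avoids the subcase bookkeeping entirely: it replaces the absorbing barrier at $0$ by a reflecting one, applies a single Chernoff bound to the number of positive steps over a window of $\Theta(\e^{-3})$ iterations to conclude that the reflecting walk reaches $\om$, and couples back to $|X|$, which agrees with the reflecting walk until the first hit of $0$. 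That route uses no Feller formula and no tail upgrade, though it too leans implicitly on a uniform drift hypothesis and would likewise benefit from the observation that the downward-drift regime is short-lived.
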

\begin{proof}
Suppose that we consider a process $Z_1,Z_2,\ldots, $ such that $Z_t$ is the size of $X$ after $t$ iterations, unless $X$ becomes zero. In the latter case we use 0 as a reflecting barrier for $Z_t$. Now consider the first $\t=40s/(\e^3(2\s-1))$ steps of the $Z$ process where $\s=\frac{s}{2(s+1)}+\frac14\in (\frac12,\frac{s}{s+1})$. Given the probabilty that the walk followed by $X$ increases with probability $\sim s$, the Chernoff bounds imply that w.h.p. $Z$ makes at least $2\t\s/3$ positive steps and this means that $Z$ will at some stage reach $20/\e^3$. Going back to $X$, we see that w.h.p. either $|X|$ reaches 0 or $|X|$ reaches $20/\e^3$.
\end{proof}
\begin{lemma}\label{n1}
If $|X|$ reaches $\om=20/\e^3\to\infty$ then w.h.p. $X$ reaches $[n]$.
\end{lemma}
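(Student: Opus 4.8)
The plan is to show that once $|X|$ reaches $\om = 20/\e^3$, the process behaves like a biased random walk with upward drift bounded below by a constant $>1$ (in the sense of $p_+/p_-$), all the way up to $|X|=n$, so that the probability of ever returning to $0$ is exponentially small in $\om$, and hence $o(1)$. The subtlety is that the bound $p_+/p_-\teps s$ from Case BD2 was proved only under the side conditions \eqref{dam} on $|(X\cup N(X))\cap S_1|$ and the ``$|X|\ge \e t$'' hypothesis; so the first task is to verify that these conditions hold throughout the relevant range, w.h.p., once we have reached $|X|=\om$.

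First I would use Lemma \ref{link} (with $Z_t=|X|$, $\g$ a constant in $(1/2,1)$ coming from $p_+/p_-\teps s$ via Case BD1, $a=\om/2$, $b=\om$) together with Lemma \ref{n0} to conclude that, conditional on reaching $\om$, w.h.p. we reach $\om$ in $O(\e^{-3})$ iterations, and thereafter $|X|$ never drops back below $\om/2$ — more precisely, $|X|\geq \r t$ for a suitable small $\r>0$ for all subsequent $t$, with failure probability $O(e^{-\Omega(\om)})=o(1)$. This is exactly what is needed to validate the hypothesis $|X|\geq \e t$ used in Case BD2 (choosing $\r\geq \e$, which is fine since $\e\to 0$). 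Next, Lemma \ref{early} controls the growth of $(X\cup N(X))\cap S_1$ while $|X|\leq n/(np)^{9/8}$: each iteration increases it with probability $O(\e^{-2}/\om_0)$, and since by \eqref{dur}/Lemma \ref{link} the number of iterations before $|X|$ exceeds $n/(np)^{9/8}$ is $O(n/(np)^{9/8})$, a union bound (or Chernoff bound on the number of successes) gives that w.h.p. \eqref{dam} holds throughout that range. For $|X|\geq n/(np)^{9/8}$ we instead use $|X|(n-|X|)\geq \e^{-2}n|S_1|$ from Lemma \ref{lem1}\eqref{itemsizeS1}, exactly as in the derivation of \eqref{1a}--\eqref{2b}, so that Case BD2 applies for all $20/\e^3\leq |X|\leq n_1$.

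With Case BD2 giving $p_+/p_-\teps s$ on $[\om,n_1]$ and Case BD3 giving $p_+/p_-\geq 1/(5np)$ on $[n_1,n]$, I would finish by a two-stage Gambler's Ruin / coupling argument. Apply \eqref{abs} with $z_0=\om$, absorbing barriers at $\om/2$ and $n_1$, and $\b/\a=p_-/p_+\leq (1+O(\e))/s$ bounded away from $1$: the probability of hitting $\om/2$ before $n_1$ is at most $(\b/\a)^{\om/2}=e^{-\Omega(\om)}=o(1)$. (Here I use the standard domination of the actual $|X|$-walk by a homogeneous biased walk with transition ratio $\sup p_-/p_+$, legitimate because the per-step bias is uniformly favourable on the whole range.) Once $|X|$ reaches $n_1$, the interval $[n_1,n]$ has length only $n/(np)^{1/2}=o(n)$, and on it $p_+/p_-$ is still at least $1/(5np)$, which suffices to show that from $n_1$ the walk reaches $n$ before returning to $\om/2$ with probability $1-o(1)$: a crude bound via \eqref{abs} on $[\om/2,n]$ with ratio $\le 5np$ gives failure probability $O((5np)^{-(n_1-\om/2)})$, utterly negligible — alternatively, once $|X|\ge n_1$, Case BD3's lower bound on $p_+$ shows the expected time to absorb at $n$ is $O(n/(np)^{1/2})\cdot\mathrm{poly}$, and the chance of ever dropping back to $\om/2$ is $o(1)$. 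Combining the two stages, w.h.p. $X$ reaches $[n]$.

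The main obstacle is the bookkeeping in the middle paragraph: one must ensure that the side hypotheses of Case BD2 (the $S_1$-bound \eqref{dam} and $|X|\ge \e t$) are maintained for the entire excursion from $\om$ up to $n_1$, not just locally, and that the number of iterations stays $O(n/(np)^{9/8})$ in the regime where Lemma \ref{early} is invoked — this is where Lemma \ref{link} (applied repeatedly, or once with $b$ close to $n_1$) and the duration estimate \eqref{dur} must be combined carefully so that the accumulated failure probabilities from all the pieces still sum to $o(1)$.
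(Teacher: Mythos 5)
Your first stage (from $\om$ to $n_1$) is correct and essentially matches the paper: Lemma \ref{link} applied to $Z_t=|X|$ with the bias $p_+/p_-\teps s$ from Case BD2, plus the bookkeeping to verify the side conditions \eqref{dam} and $|X|\geq\e t$. The paper in fact invokes Lemma \ref{link} directly with $a=10/\e^3$ and $b=n_1$, leaving most of that bookkeeping implicit, so you are at least as careful here.

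The gap is in your second stage, the traversal of $[n_1,n]$. Case BD3 gives only $p_+/p_-\geq 1/(5np)$, i.e.\ $p_-/p_+\leq 5np$; this is a \emph{lower} bound on a ratio that can fall well below $1$, so on $[n_1,n]$ the walk may be biased \emph{leftward}, possibly strongly. Your ``crude bound via \eqref{abs} on $[\om/2,n]$ with ratio $\leq 5np$'' giving failure probability $O((5np)^{-(n_1-\om/2)})$ has the direction of the bias reversed. With $\b/\a$ as large as $5np>1$, the Gambler's-Ruin probability of hitting the right endpoint $n$ before the left endpoint from a start near $n_1$ is approximately $(5np)^{-(n-n_1)}=(5np)^{-n/(np)^{1/2}}$ -- exponentially \emph{small} -- and the failure probability is correspondingly $1-o(1)$, not $o(1)$. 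Your ``alternatively'' remark has the same problem: an expected-duration bound does not help, and the number of excursions needed from $n_1$ is not polynomial but rather of order $\eta^{-1}=(5np)^{n/(np)^{1/2}}$, which is superpolynomial in $n$. The paper closes this gap by an explicit repeated-excursion argument: each time the walk leaves $n_1$ to the right it reaches $n$ with probability at least $\eta=(5np)^{-n_2}$ (a single forced run of $n_2=n/(np)^{1/2}$ steps), each time it leaves to the left the chance it ever reaches $20/\e^3$ before returning to $n_1$ is at most $(1-\z)^{n_1}$ by \eqref{abs} applied in the Case BD2 regime, and with $m=\eta^{-1}\log n$ one checks $m(1-\z)^{n_1}\to 0$ and $m\eta\to\infty$, so w.h.p.\ one of the first $m$ excursions absorbs at $n$ before any falls back. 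This counting step is precisely what your proposal is missing.
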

\begin{proof}
We first show that if $|X|$ reaches $\om$ then w.h.p. $|X|$ reaches $n_1=n-n/(np)^{1/2}$. This follows from Lemma \ref{link} with $a=10/\e^3$ and $b=n_1$, applied to the walk $Z_t=|X|$ at iteration $t$.  In particular, as long as $|X|>\max\set{a,\e t}$, the hypotheses of the lemma are satisfied with a positive bias $\teps s$, by \eqref{BD2}.

Now assume that $|X|=n_1$.  The analysis of Case BD3 shows that there is a probability of at least $\eta=(1/5np)^{n_2}$ that $X$ reaches $[n]$ after a further $n_2=n-n_1=n/(np)^{1/2}$ steps. Now consider the following experiment: when $|X|=n_1$, the walk moves right with probability at least 1/2 and left with probability at most 1/2. If it moves right then there is a probability of at least $\eta$ that $|X|$ reaches $n$ before it returns to $n_1$. If it moves left then \eqref{abs} implies that there is a constant $0<\z<1$ such that the probability of $|X|$ reaching $20/\e^3$ before returning to $n_1$ is at most $(1-\z)^{n_1}$, for any constant $\z<s/(s+1)$. (Since this event can be analyzed with Case BD2 exclusively.) Let $m=\eta^{-1}\log n$. Then we have $m(1-\z)^{n_1}\to 0$ and $m\eta\to \infty$. So w.h.p. $X$ will reach $[n]$ after at most $m$ returns and never visit $20/\e^3$. Indeed, the probability it does not reach $[n]$ is at most $o(1)+(1-\eta)^{m/2}=o(1)$. (The first $o(1)$ bounds the probability that $|X|$ moves right from $n_1$ fewer than $m/2$ times.)
\end{proof}
The next lemma concerns the case where $d(v_0)=o(np)$.
\begin{lemma}\label{vearly}
Suppose that $d(v_0)=o(np)$ and let $\om=np/d(v_0)$. Then w.h.p. $v_0\in X$ for the first $\om^{1/2}$ iterations.
\end{lemma}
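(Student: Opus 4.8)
The plan is to condition on $G$ satisfying Lemma~\ref{lem1} (true w.h.p.) and to bound, over the randomness of the process, the probability that $v_0$ is recoloured to a non‑mutant at some point during the first $\om^{1/2}$ iterations. Since $d(v_0)=o(np)$, for $n$ large $v_0\in S_0\subseteq S_1$, so Lemma~\ref{lem1}\eqref{itemS0S1dist} implies that every vertex $w\neq v_0$ with $\mathrm{dist}(v_0,w)\le\om_0$ has $d(w)\ge\om_0$; also, applying Lemma~\ref{lem1}\eqref{item7/eps^2} to $\{v_0\}$, at most $7/\e^2$ neighbours of $v_0$ lie in $S_1$, so all but at most $7/\e^2$ of them have degree at least $(1-\e)np$. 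During the first $t\le\om^{1/2}$ iterations the set $\wh X$ of vertices that have ever been mutant is connected, contains $v_0$, and has $|\wh X|\le 1+\om^{1/2}<\om_0$ for $n$ large; hence every vertex of $\wh X\sm\{v_0\}$ — in particular every vertex of $X\sm\{v_0\}$ throughout this period — has degree at least $\om_0$.

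Next I would estimate, for each $t\le\om^{1/2}$, the conditional probability $q_t$ that iteration $t$ recolours $v_0$, given the history up to iteration $t-1$ and on the event that $v_0$ is still mutant. For a single step to recolour $v_0$ we must pick a non‑mutant neighbour $u$ of $v_0$ (probability $1/w(X)$) which then picks $v_0$ (probability $1/d(u)$), so that single‑step probability is $\frac1{w(X)}\sum_{u\in N(v_0)\sm X}\frac1{d(u)}\le\frac1{w(X)}\bigl(\frac{7/\e^2}{\om_0}+\frac{d_{\bar X}(v_0)}{(1-\e)np}\bigr)$ by the degree facts above. Dividing by $p_++p_-\ge p_+$, using \eqref{1} together with Lemma~\ref{lem1}\eqref{item_max-degree},\eqref{iteme(S)1|S|} to obtain $p_+\ge\frac{s}{w(X)}\bigl(\frac{d_{\bar X}(v_0)}{d(v_0)}+\frac{(|X|-1)\om_0}{10np}\bigr)$, and then using $d_X(v_0)\le|X|-1$ with the elementary inequality $\frac{d_{\bar X}(v_0)}{d(v_0)}+\frac{d_X(v_0)\om_0}{10np}\ge\min\{1,\frac{d(v_0)\om_0}{10np}\}$, I arrive at a bound independent of the process state,
\[
q_t\ \le\ Q\ :=\ \frac{7}{s\e^2}\max\Bigl\{\frac1{\om_0},\ \frac{10np}{d(v_0)\om_0^{2}}\Bigr\}\ +\ \frac{d(v_0)}{s(1-\e)np}.
\]
A union bound over iterations then gives $\Pr(v_0\notin X\text{ at some point in the first }\om^{1/2}\text{ iterations})\le\sum_{t\le\om^{1/2}}\E\bigl[\ind{v_0\in X\text{ before iteration }t}\,q_t\bigr]\le\om^{1/2}Q$.

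It then remains to check $\om^{1/2}Q=o(1)$. With $\om=np/d(v_0)$, $\om_0=\tfrac{\e^2np}{100\log np}$, $\e=1/\log\log\log n$, $np=O(\log n)$, $d(v_0)\ge 1$ and hence $\om^{1/2}\le(np)^{1/2}$ and $\om^{1/2}/d(v_0)\le(np)^{1/2}$, the three contributions to $\om^{1/2}Q$ are $O\!\bigl(\tfrac{\log np}{\e^4(np)^{1/2}}\bigr)$, $O\!\bigl(\tfrac{(\log np)^2}{\e^6(np)^{1/2}}\bigr)$ and $\tfrac1{s(1-\e)}\om^{-1/2}$, each of which is $o(1)$: the first two because $(np)^{1/2}\ge(\log n)^{1/2}$ outgrows any fixed power of $\log\log n$ and $\log\log\log n$, and the third because $\om\to\infty$. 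This gives the lemma.

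The step needing the most care is the interplay between the numerator and the lower bound for $p_+$: the contribution of the (at most $7/\e^2$) neighbours of $v_0$ in $S_1$ must be charged against the factor $\om_0$ coming from the high‑degree mutants of $X\sm\{v_0\}$, which is exactly what forces the case distinction packaged in the $\min\{1,d(v_0)\om_0/(10np)\}$ estimate, and one then has to confirm that the various $\log np$ and $\log\log\log n$ powers are absorbed by the single factor $(np)^{1/2}$. A minor point is that $q_t$ depends on the random configuration, so the union bound is taken against the deterministic bound $Q$, which is valid precisely on the event that $v_0$ has not yet left $X$.
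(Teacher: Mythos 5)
Your proof is correct and follows the same basic strategy as the paper's: bound the per-step probability that $v_0$ is recoloured while it is still mutant, then union-bound over the $\om^{1/2}$ iterations. The paper's version is leaner in two places where you overcomplicate. For the numerator, since $v_0\in S_0$ (not merely $S_1$), Lemma \ref{lem1}\eqref{itemS0S0dist} already gives that \emph{every} neighbour of $v_0$ has degree $>np/10$, so there is no need to split $N(v_0)$ into its $S_1$ and non-$S_1$ parts and carry the $\frac{7/\e^2}{\om_0}$ term; one simply gets numerator $\leq \frac{10\,d(v_0)}{w(X)np}$. For the denominator, the paper observes that every $v\in X\sm\{v_0\}$ lies outside $S_1$ (by \eqref{itemS0S1dist} and the connectedness/small size of $\wh X$), hence has $d_{\bar X}(v)/d(v)\sim 1$, giving $p_+\gtrsim s|X|/w(X)$; your weaker floor $\om_0/(10np)$ forces the $\min$-argument and the extra $\log np$ and $\e^{-1}$ factors. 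The paper thus obtains a clean per-step removal probability of $O(1/\om)$ uniformly, which is tighter than your $Q$, though both are easily $o(\om^{-1/2})$ and the union bound closes identically. So: correct, same approach, but with looser intermediate estimates than necessary.
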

\begin{proof}
If $X=\set{v_0}$ then it follows from Case BD1c that $p_-/p_+=O(1/\om)$ and the probability $X$ becomes empty is $O(1/\om)$. If $v_0\in X$ and $|X|>1$ then the probability that $v_0$ is removed from $X$ in the next iteration is also $O(1/\om)$. This is because all of $v_0$'s neighbors have degree $\Omega(np)$ outside $X$ (Lemma \ref{lem1}(\ref{itemS0S0dist})) and all vertices of $X$ other than $v_0$ have many more neighbors in $\bar X$ than $X$.  (Note from Lemma \ref{early} that no vertex in $X$ other than $v_0$ can be in $S_0$).

So, the probability that $v_0$ gets removed this early is $O(\om^{1/2}/\om)=o(1)$. 
\end{proof}

{\bf Case BDF1: $v_0\notin S_1$ and $Y_0=\es$.}\\
This includes the case where $v_0$ is chosen uniformly at random. (This follows from Lemma \ref{lem1}(\ref{itemsizeS1}).) We have $d(v_0)\teps np$ and Lemma \ref{early} implies that only Case BD1a is relevant for the first $\omega_0^{3/4}$ steps, and in this case the bias $p_+/p_-$ in the change in the size of $|X|$ is asymptotically equal to $s$. Equation \eqref{abs} implies that so long as the bias is asymptotically $s$, $|X|$ will reach $m=2\omega_0^{1/2}$ before reaching 0, with probability $\teps (s-1)/s$. Equation \ref{dur} implies that w.h.p. this happens during the first $\om_0^{3/4}$ iterations. Lemma \ref{n1} then implies that w.h.p. $X$ will reach $[n]$ from here, proving Theorem \ref{th1}(b).

{\bf Case BDF2: $d(v_0)=o(np)$:} We are initially in Case BD1c and Lemma \ref{vearly} implies that we stay in this case for the first $\om^{1/2}$ iterations, where $\om=np/d(v_0)$. Whenever $|X|=1$, we see from \eqref{onp} that the probability $X$ becomes empty in the next iteration is $O(1/\om)$. Furthermore, \eqref{abs} with $a=2$ then implies that $|X|$ reaches $\om^{1/3}$ with probability $\teps (s-1)/s$ before returning to 1. Combining the above facts, we see that $|X|$ reaches $\om^{1/3}$ within $\om^{1/2}$ iterations and we can then apply Lemma \ref{n1} to complete the proof of Theorem \ref{th1}(a).

{\bf Case BDF3: $v_0\in S_1,d(v_0)=\a np, N(v_0)\cap S_0=\es$ where $\a\neq 1$ is a positive constant.}\\
This is part of Case BD1b of Section \ref{pp}.  We consider the first $\om_0^{1/2}$ steps. It follows from Section \ref{eii} that $X\cap S_1\subseteq \set{v_0}$ throughout the first $\om_0^{1/2}$ iterations. We note that $d_{\bar X}(x_1)/d(x_1)=1-o(1)$. At iteration $j\leq \om_0$ we will have $p_+/p_-$ equal to either $(1-o(1))s/(1+(\a-1)/j)$ (by \eqref{onev0ratio}) or $(1-o(1))s$ depending on whether or not $v_0$ is still in $X$. And we note that if $v_0\in X$ then, conditioned on $X$ losing a vertex in the next iteration, the vertex it loses is $v_0$ with probability asymptotically equal to 
\begin{equation}\label{losingv0}
\frac{\frac{\alpha np}{n}\frac{1}{np}}{\frac{(|X|-1)np}{n}\frac{1}{np}+\frac{\alpha n p}{n}\frac 1 {np}}=\frac{\a}{|X|-1+\alpha}.
\end{equation}
Also, if $v_0$ leaves $X$ then Lemma \ref{early} implies that it only returns to $X$ with probability $O(\e^{-2}\om_0^{1/2}/\om_0)=o(1)$ in the next $\om_0^{1/2}$ iterations. 

We can thus asymptotically approximately model $|X|$ in the first $\om_0^{1/2}$ iterations as a random walk $\cW_0=(Z_0=1,Z_1,\ldots,)$ on $\set{0,1,\ldots,n}$ where at the $t$th step if $Z_{t-1}=j>0$ then either (i) $\Pr(Z_t=Z_{t-1}+1)=\a_j=s/(s+1+(\a-1)/j)$ or (ii) $\Pr(Z_t=Z_{t-1}+1)=\b=s/(s+1)$. The walk starts with probabilities as in (i) and at any stage may switch irrevocably to (ii) with probability $\teps \eta_j=\a/(j-1+\a)$, by \eqref{losingv0}. The fixation probability is then asymptotically equal to the probability this walk reaches $m=\om_0^{1/3}$ before it reaches 0. Let $q_j=\frac{s^{-j}-s^{-\om_0^{1/3}}}{1-s^{-\om_0^{1/3}}}$ denote the probability of reaching 0 before $m$ in the random walk $\cW_1$ where there is always a rightward bias of $s$.

Let $p_j=p_j(BDF3)$ denote the probability that the walk reaches 0 before $m=\omega_0^{1/3}$. (The BDF3 in brackets indicates that while $p_j$ always refers to the probability of the stated event , its value depends on the particular case.) Then $p_0=1$ and $p_{m}=0$ and 
\beq{pj}{
p_j=\a_jp_{j+1}+(1-\a_j)(1-\eta_j)p_{j-1}+(1-\a_j)\eta_jq_j
}
for $1<j<\om_0^{1/2}$, from which we can compute $p_1$, asymptotically.

If $|X|$ reaches $\om_0^{1/2}$ then Lemma \ref{n1} implies that it will reach $n$ w.h.p. This establishes part (c) of Theorem \ref{th1}.

{\bf Case BDF4: $v_0\notin S_1$ and $N(v_0)\cap S_0=\set{y_0}$:}\\
As such we begin in Case BD1d. We again consider the first $\om_0^{1/2}$ rounds and we see that w.h.p. we remain in this case, unless $v_0$ leaves $X$. 

Thus, as in BDF3, we can asymptotically approximately model $|X|$ in the first $\omega_0^{1/2}$ iterations as a suitable random walk, showing that the fixation probability is a function just of $d(y_0)$ in this case. Equation \eqref{pj} becomes
\beq{pjx}{
p_j=p_j(BDF4)=\b_jp_{j+1}+(1-\b_j)(1-\eta_j)p_{j-1}+(1-\b_j)\eta_jq_j
}
where $\b_j=s(s+1+1/jd(y_0))$, which comes from replacing \eqref{bb2} by \eqref{bb4}. This establishes part (d) of Theorem \ref{th1}.

{\bf Case BDF5: $v_0\in S_1$ and $N(v_0)\cap S_0=\set{y_0}$:}\\
This has the same characteristics as Case BDF3. They both rely on \eqref{onev0ratio}.
\subsubsection{$s<1$}
Arguing as above we see that except when $|X|\leq 20/\e^3$ that w.h.p. the size of $X$ follows a random walk where the probability of moving left from a positive position is asymptotically at least $\tfrac{|X|-1}{(s+1)|X|}>\frac12$ for $|X|>\tfrac{2}{1-s}$. We argue as in we did at the end of Case BDF2, with right moves and left moves reversed, that w.h.p. $X$ becomes empty.
\subsubsection{$s=1$}
It follows from Maciejewski \cite{Mac} that the fixation probability of vertex $v$ is precisely $\p(v)=d(v)^{-1}/\sum_{w\in [n]}d(w)^{-1}$. In a random graph with $np=O(\log n)$ this gives $\max_v\p(v)=O(\log n/n)$ and when $np\gg\log n$ this gives $\max_v\p(v)=O(1/n)$.

\subsection{$np\gg\log n$ and $s>1$}\label{np>>logn}
If $np/\log n\to \infty$ then all vertices have degree $\sim np$, see Theorem 3.4 of \cite{FK}. So $S_1=\es$ and all but (\ref{e(S)3|S|}), (\ref{iteme(S)1|S|}), (\ref{itemBk}), \eqref{itemtheta} of Lemma \ref{lem1} hold trivially. But \eqref{e(S)3|S|} is only used to bound $e(X:\bar X)$, where there is the possibility of low degree vertices. This is unnecessary when $np/\log n\to \infty$ since then w.h.p. $e(S:\bar S)\sim |S|(n-|S|)np$ for all $S$. Property \eqref{iteme(S)1|S|} is only used in \eqref{xx1}, \eqref{xx2} to bound $e(X)$. But because $|X|$ is small this will be small compared to $|X|np$ and only contributes to the error term. Properties \eqref{itemBk}, \eqref{itemtheta} are not used in analysing Birth-Death. In conclusion we see that only Case BDF1 is relevant and Theorem \ref{th1} holds in this case.
\section{Death-Birth} \label{DB}
The analysis here is similar to the Birth-Death process and so we will be less detailed in our description. We first replace \eqref{1}, \eqref{2} by 
\begin{align}
p_+=p_+^{DB}(X)=\Pr(|X|\to |X|+1)&=\frac{1}{n}\sum_{v\in N(X)}\frac{sd_X(v)}{sd_X(v)+d_{\bar X}(v)}.\label{101}\\
p_-=p_-^{DB}(X)=\Pr(|X|\to |X|-1)&=\frac{1}{n}\sum_{v\in X}\frac{d_{\bar X}(v)}{sd_X(v)+d_{\bar X}(v)}\leq \frac{|X|}{n}.\label{102}
\end{align}
We use the notation of Section \ref{BirDe}. We will once again assume first that $np=O(\log n)$ and remove the restriction later in Section \ref{npgglogn}.
\subsection{The size of $(X\cup N(X))\cap S_1$}\label{eiii}
\begin{lemma}\label{earlyxx}
While, $|X|\leq n/(np)^{9/8}$, the probability that $X\cap (S_1\sm S_0)$ increases in an iteration is $O(\e^{-2}/\om_0)$.
\end{lemma}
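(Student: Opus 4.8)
The plan is to mirror the proof of Lemma \ref{early} for the Birth-Death process, adapting the transition probabilities \eqref{101}, \eqref{102} to the Death-Birth setting. As before, all probability estimates are conditioned on $X$ changing in the given iteration, and we estimate the conditional probability that the chosen step adds a new vertex of $S_1\setminus S_0$ to $X$. In the Death-Birth process, a vertex $u$ is added to $X$ by choosing $u\in N(X)$ uniformly at random (over $[n]$) and then having $u$ adopt the mutant type; so a vertex $w\in S_1\setminus S_0$ enters $X$ only if $w\in N(X)$ is the chosen vertex \emph{and} the mutant neighbor is selected. The key observation is that if $w\in S_1\setminus S_0$ is adjacent to $X$, then by Lemma \ref{lem1}\eqref{itemS0S1dist} every vertex in $N(w)$ that can lie in $X$ has degree $\geq\om_0$ (since such a vertex is within distance $1$ of $S_1$), and in particular the component $C$ of $G_X$ meeting $N(w)$ consists of high-degree vertices.

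First I would dispose of the case $|C|\leq\om_0/2$: here the relevant mutant vertex $v\in C\cap N(w)$ has at least $\om_0/2$ neighbors in $\bar X$, and by Lemma \ref{lem1}\eqref{item7/eps^2} at most $O(\e^{-2})$ of them lie in $S_1\cup N(S_1)$, so the conditional probability that the adopted-from neighbor of the chosen $w$ is a mutant — equivalently, that $w$'s contribution $sd_X(w)/(sd_X(w)+d_{\bar X}(w))$ is non-negligible — is controlled; summing over the $O(1/\om_0)$-fraction of configurations yields the bound $O(\e^{-2}/\om_0)$. For the case $\om_0/2<|C|\leq n/(np)^{9/8}$, I would set $C_0=\{v\in C: d_{\bar X}(v)>0\}$, use Lemma \ref{lem1}\eqref{e(S)3|S|} and \eqref{iteme(SSbar)} together with $\D\leq 5np$ to get $|C_0|\geq|C|/10$, and then bound the conditional probability of adding a member of $S_1^+=S_1\cup N(S_1)$, partitioning $C_0$ into $C_0\cap S_0$, the set $C_1$ of vertices with $d_X(v)\geq d(v)/2$, and the remainder $C_2$, exactly as in the display \eqref{CS0} of Lemma \ref{early}. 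The first part is bounded by $|C_0\cap S_0|\leq 2|C|/\om_0$ via Lemma \ref{lem1}\eqref{itemS0S0dist}; the $C_1$ part is $O(|C|/np)$ via Lemma \ref{lem1}\eqref{e(S)3|S|}; and the $C_2$ part is $O(|C_2|/(\e^2\om_0 np))$ via Lemma \ref{lem1}\eqref{item7/eps^2} applied componentwise. Dividing by $|C_0|$ gives $O(\e^{-2}/\om_0)$.

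The one genuinely new point is to check that the Death-Birth \emph{denominators} $sd_X(v)+d_{\bar X}(v)$ do not distort the estimates relative to the pure $d_{\bar X}(v)$ denominators appearing in Lemma \ref{early}: since $s$ is a fixed constant and $d(v)=\Theta(np)$ for the high-degree vertices involved, $sd_X(v)+d_{\bar X}(v)=\Theta(np)$ as well, so every ratio of the form $d_{S_1^+}(v)/(sd_X(v)+d_{\bar X}(v))$ differs from $d_{S_1^+}(v)/d_{\bar X}(v)$ by at most a constant factor (when $d_{\bar X}(v)$ is itself $\Theta(np)$) and is in any case $O(d_{S_1^+}(v)/(np))$; this is all that the argument used. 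The main obstacle — such as it is — is purely bookkeeping: carefully tracking that the definition of "$X\cap(S_1\setminus S_0)$ increases'' (rather than "$(X\cup N(X))\cap S_1$ increases'' as in the BD lemma) is the right quantity for the later Death-Birth case analysis, and confirming that vertices of $S_0$ entering $N(X)$ are handled separately (as in Lemma \ref{earlyN}) rather than here. Granting the earlier lemmas, no step requires more than the Chernoff bounds and the structural facts already assembled in Lemma \ref{lem1}.
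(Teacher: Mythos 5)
Your proposal correctly identifies the mechanism by which a vertex of $S_1\setminus S_0$ enters $X$ in the Death–Birth dynamics (it must be chosen as the death vertex and then adopt a mutant neighbor's type), and the small-component case is in the right spirit, though the paper actually splits on $|X|\leq np/20$ rather than $|C|\leq\om_0/2$, the point being that for $|X|\leq np/20$ the dying vertex $v\notin S_0$ keeps $d_{\bar X}(v)\geq np/20$. The genuine gap is in the large-component case: you propose to partition $C_0\subseteq C\subseteq X$ into $C_0\cap S_0$, $C_1=\set{v:d_X(v)\geq d(v)/2}$, and $C_2$, ``exactly as in the display \eqref{CS0} of Lemma~\ref{early}.'' But that decomposition was tailored to the Birth–Death formula, where the harmful probability is an average over \emph{mutant} vertices $v\in C_0$ of the ratio $d_{\bar X\cap S_1^+}(v)/d_{\bar X}(v)$. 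In Death–Birth the relevant sums run over the \emph{non-mutant} vertices $v\in N(X)$, each contributing $sd_X(v)/(sd_X(v)+d_{\bar X}(v))$, so a partition of $C_0\subseteq X$ by properties of mutants does not partition either the numerator or the denominator. None of the three bounds you quote — $|C_0\cap S_0|\leq 2|C|/\om_0$, $O(|C|/np)$ for $C_1$, $O(|C_2|/(\e^2\om_0 np))$ for $C_2$ — attaches to a piece of $\Pr\brac{X\cap(S_1\setminus S_0)\text{ increases}}$, and ``dividing by $|C_0|$'' has no meaning here since nothing is being normalized by $|C_0|$ in the DB formula.

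The paper's argument for $np/20<|C|\leq n/(np)^{9/8}$ is both simpler and structurally different. Writing the conditional probability as $A/B$ with
\[
A=\sum_{v\in N(X)\cap S_1}\frac{sd_X(v)}{sd_X(v)+d_{\bar X}(v)},\qquad B=\sum_{v\in N(X)}\frac{sd_X(v)}{sd_X(v)+d_{\bar X}(v)}=np_+,
\]
it bounds $A$ crudely term-by-term by $|N(X)\cap S_1|\leq 7s|X|/(\e^2\om_0)$ using Lemma~\ref{lem1}\eqref{item7/eps^2} applied component by component, and bounds $B$ below by $\Omega(|X|)$ using $\D\leq 5np$ together with Lemma~\ref{lem1}\eqref{iteme(S,T)} to get $|N(X)|=\Omega(|X|np)$. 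The ratio is then $O(\e^{-2}/\om_0)$ directly. Replacing your $C_0\cap S_0/C_1/C_2$ decomposition with this $A/B$ estimate, and shifting the case threshold to $|X|\leq np/20$, would repair the argument; as written it does not go through.
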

\begin{proof}
We consider the addition of a member of $S_1$ to $X$. This would mean the choice of $v\in N(X)\cap (S_1\sm S_0)$ and then the choice of a neighbor $w$ of $v$ in $X$. Let $C$ be the component of the graph $G_X$ induced by $X$ that contains $w$. Assume first that $|X|\leq np$. We have $d(v)\geq np/10$ and Lemma \ref{lem1}\eqref{itemSvd} implies that $d_S(v)\leq (\log\log n)^2$. So we can bound the probability of adding a member of $S_1\sm S_0$ by $O(\e^{-2}\log\log n/np)=O(\e^{-2}/\om_0)$.

Now assume that $np<|C|\leq n/(np)^{9/8}$. Then,
\[
\Pr(X\cap S_1\text{ increases})\leq \frac{A}{B},
\]
where
\[
A=\sum_{v\in N(X)\cap S_1}\frac{sd_X(v)}{sd_X(v)+d_{\bar X}(v)}\quad\text{ and }\quad B= \sum_{v\in N(X)}\frac{sd_X(v)}{sd_X(v)+d_{\bar X}(v)}.
\]
Now applying Lemma \ref{lem1}\eqref{item7/eps^2} to each component of $G_X$ shows that 
\[
|N(X)\cap S_1|\leq \frac{7s|X|}{\e^2\om_0}\text{ and so }A\leq \frac{7s|X|}{\e^2\om_0}.
\] 
On the other hand, Lemma \ref{lem1}\eqref{item_max-degree}\eqref{iteme(S,T)} imply that 
\mults{
Bs^{-1}\geq \sum_C\frac{|N(C\sm S_1)|-|C\cap S_1|}{5np}\geq \sum_C\frac{|C\sm S_1|(n-|C|-|S_1|)p-|C\cap S_1|}{5np}\geq\\
 \sum_C\frac{|C\sm S_1|(n-o(n))p-|C\cap S_1|}{5np}\geq \sum_C\frac{|C|(n-o(n))p-|C\cap S_1|(np+1)}{5np}\geq\\
 \sum_C\frac{|C|\brac{(n-o(n))p-\frac{7(np+1)}{\e^2\om_0}}}{5np}\geq \sum_C\frac{|C|}{6}=\frac{|X|}6.
}
\end{proof}

We next consider the first $\om_0^{3/4}$ iterations. 
\begin{lemma}\label{earlyx}
W.h.p. $N(X)\cap S_0$ does not increase during the first $\om_0^{3/4}$ iterations. 
\end{lemma}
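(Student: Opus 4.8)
The statement to prove is Lemma~\ref{earlyx}: w.h.p. $N(X)\cap S_0$ does not increase during the first $\om_0^{3/4}$ iterations of the Death--Birth process.

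\textbf{Plan.} The approach mirrors the proof of Lemma~\ref{earlyN} for the Birth--Death case, adapted to the Death--Birth transition probabilities \eqref{101}. By Remark~\ref{rem1} and Lemma~\ref{n1}-type arguments (the size of $X$ again follows a random walk with positive drift, as will be verified in the $p_+/p_-$ analysis for Death--Birth), during the first $\om_0^{3/4}$ iterations the set $\wh X$ of vertices ever in $X$ is connected and has size $o(\om_0)$. First I would enumerate the ways a vertex of $S_0$ can be added to $N(X)$: either (i) some $v\in N(X)$ with a neighbor in $S_0$ is newly added to $X$, or (ii) $v_0$ itself lies in $N(S_0)$ so that a single step from the start creates the incidence --- but this is a fixed event not an ``increase'', and in fact by Lemma~\ref{lem1}(\ref{itemCcapS1}),(\ref{itemS0S0dist}) we may rule out $\wh X$ acquiring two distinct neighbors in $S_0$ or a neighbor in $S_0$ with two neighbors in $\wh X$.

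\textbf{Key steps.} So the event reduces to: at some iteration $t\le \om_0^{3/4}$ the DB process picks a vertex $v\in N(X)$ that is adjacent to $S_0$ and recolors $v$ to be a mutant. If $w$ denotes the relevant $S_0$-neighbor of $v$, then $\mathrm{dist}(w,v)=1$, and since $d(w)<\om_0$, Lemma~\ref{lem1}(\ref{itemS0S1dist}) gives $v\notin S_1\cup N(S_1)$ beyond short distance --- more to the point, $v$ has degree $\teps np$ (it is not in $S_1$, because $S_1$-vertices are far from $S_0$ by Lemma~\ref{lem1}(\ref{itemS0S1dist})). In a DB step, $v$ is chosen for death with probability $1/n$ and then recolored a mutant with probability $sd_X(v)/(sd_X(v)+d_{\bar X}(v))\le s d_X(v)/d_{\bar X}(v)$. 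Since $\wh X$ is connected of size $o(\om_0)$ and by Lemma~\ref{lem1}(\ref{itemCcapS1}) contains no short cycle, $d_X(v)=1$ at the moment $v$ is absorbed (otherwise $\wh X\cup\{v\}$ would contain a short cycle). Hence the per-iteration probability that $v$ is absorbed is $O\!\left(\frac{1}{n}\cdot\frac{s}{np}\right)$ per candidate $v$, but conditioning on there being a change in $X$ (which has probability $\Theta(|X|/n)\ge\Theta(1/n)$) renormalizes this to $O(s/(np))=o(1)$ for a single candidate. The number of candidate vertices $v\in N(\wh X)\cap N(S_0)$ is $O(1)$ by Lemma~\ref{lem1}(\ref{itemCcapS1}),(\ref{itemS0S0dist}), and summing over the first $\om_0^{3/4}$ iterations gives a total probability $O(\om_0^{3/4}\cdot s/(np))$. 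Since $np\gg \om_0$ (indeed $\om_0=\e^2np/(100\log np)\ll np$, and $\om_0^{3/4}\ll \om_0\ll np$), this is $o(1)$.

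\textbf{Main obstacle.} The delicate point is justifying that conditioning on ``$X$ changes this iteration'' is the right normalization and that the DB transition probabilities really do give the $1/np$ saving: one must argue $d_{\bar X}(v)=\Omega(np)$ for every candidate $v$, which follows because $v\notin S_0$ (as $v$ is adjacent to an $S_0$ vertex $w$ and Lemma~\ref{lem1}(\ref{itemS0S0dist}) forbids two $S_0$-vertices within distance $\om_0$) and $v\notin S_1$ (Lemma~\ref{lem1}(\ref{itemS0S1dist})), so $d(v)\teps np$ and at most $|X|=o(\om_0)$ of these neighbors are mutants. The bookkeeping of which $S_0$/$N(S_0)$ incidences are ruled out by the distance and girth properties in Lemma~\ref{lem1} is the part requiring care, but it is entirely parallel to the Birth--Death argument in Lemma~\ref{earlyN}.
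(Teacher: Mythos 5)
Your proof is correct and takes essentially the same route as the paper's (which is stated in only two sentences): identify that the only relevant transition is a vertex $v\in N(X)\cap N(S_0)$ being recolored to mutant, use Lemma~\ref{lem1}(\ref{itemS0S0dist}) to get $d(v)\geq np/10$, and sum the resulting $O(1/np)$ per-iteration bound over $\om_0^{3/4}$ iterations. You correctly observe that a candidate $v$ cannot lie in $S_0$ or $S_1$ (by (\ref{itemS0S0dist}), (\ref{itemS0S1dist})), that there is essentially a unique candidate (the cycle through $w\in S_0\subseteq S_1$ would contradict (\ref{itemCcapS1})), and that normalizing by $\Pr(\text{change})$ yields the $1/np$ gain.

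One small misstep in the justification: you cite Lemma~\ref{lem1}(\ref{itemCcapS1}) to conclude that $\wh X\cup\{v\}$ ``contains no short cycle'' and hence $d_X(v)=1$. Property (\ref{itemCcapS1}) only says a short cycle avoids $S_1$, and since the candidate $v$ and possibly all of $\wh X$ avoid $S_1$, that property does not forbid a short cycle inside $\wh X\cup\{v\}$. The right tool here is Lemma~\ref{lem1}(\ref{iteme(S)1|S|}): since $|\wh X\cup\{v\}|=o(\om_0)$ and $\wh X$ is connected, $e(\wh X\cup\{v\})\leq |\wh X|+1$ and $e(\wh X)\geq |\wh X|-1$, giving $d_X(v)\leq 2$. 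That weaker bound $d_X(v)=O(1)$ is all your computation needs, so the conclusion stands. Alternatively, normalizing by the tighter $\Pr(\text{change})=\Theta(|X|/n)$ rather than the crude $\Theta(1/n)$ makes the bound $d_X(v)\leq|X|$ sufficient without invoking any edge-count lemma at all.
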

\begin{proof}
Consider the addition of a member of $S_0$ to $N(X)$. Suppose that a member of $S_0$ is added to $N(X)$ because we choose $v\in N(X)$where $N(v)\cap S_0\neq \es$ and we then choose $w\in N(v)\cap X$. Lemma \ref{lem1}(\ref{itemS0S0dist}) implies that $d(v),d(w)\geq np/10$ and so we can bound this possibility in the first $\om_0^{3/4}$ iterations by $O(\om_0^{3/4}/np)=o(1)$.  
\end{proof}
\begin{lemma}\label{earlyXX}
W.h.p., if $d(v_0)\leq \e^{-2}$ then $d_{X}(X_1)\leq 1$ during the first $\om_0^{3/4}$ iterations. (We remind the reader that $X_1=X\cap S_1$.) Furthermore, if such a neighbor leaves $X$ then $d_{X}(X_1)=0$ for the remaining iterations up to $\om_0^{3/4}$.
\end{lemma}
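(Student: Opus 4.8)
The plan is to prove this in three stages: first pin down $X_1:=X\cap S_1$, then extract a deterministic structural restriction from the short‑cycle property, and finally run a first‑moment argument on the number of ``dangerous'' iterations. Since $d(v_0)\le\om_0\ll np/10$ we have $v_0\in S_0\subseteq S_1$. I would begin by showing that w.h.p. $X_1\subseteq\{v_0\}$ throughout the first $\om_0^{3/4}$ iterations, so that the lemma reduces to controlling $d_X(v_0)$ (indeed $d_X(X_1)=d_X(v_0)\mathbbm 1\{v_0\in X\}$ once $X_1\subseteq\{v_0\}$). No vertex of $S_1\setminus S_0$ is added to $X$: Lemma \ref{earlyxx} bounds the per‑iteration probability of this by $O(\e^{-2}/\om_0)$ while $|X|\le n/(np)^{9/8}$, which holds here since $|X|\le 1+\om_0^{3/4}$, so a union bound over $\om_0^{3/4}$ iterations gives $O(\e^{-2}\om_0^{-1/4})=o(1)$. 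No vertex of $S_0\setminus\{v_0\}$ is added either: in the Death--Birth process $\wh X$ stays connected (each new mutant copies a mutant neighbour), has size $\le 1+\om_0^{3/4}$, and contains $v_0\in S_0$, so a newly added $S_0$‑vertex would lie within graph‑distance $\om_0^{3/4}+1<\om_0$ of $v_0$, contradicting Lemma \ref{lem1}(\ref{itemS0S0dist}).

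The key structural observation is next. Suppose a neighbour $w$ of $v_0$ is added to $X$ at some iteration $t\le\om_0^{3/4}$, by $w$ copying a mutant neighbour $z$. If $z\ne v_0$ then, since $\wh X$ is connected and contains $v_0$ and $z$, there is a $v_0$–$z$ path $P$ inside $\wh X$; as $w\notin\wh X$ just before iteration $t$, appending the edges $zw,wv_0$ to $P$ yields a cycle through $v_0$ of length $|P|+2\le|\wh X|+1\le\om_0^{3/4}+2<\om_0$, contradicting Lemma \ref{lem1}(\ref{itemCcapS1}) as $v_0\in S_1$. Hence $z=v_0$; in particular $v_0\in X$ at that moment, and the same cycle argument applied to any other mutant neighbour of $w$ shows $v_0$ is $w$'s only mutant neighbour. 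Moreover, since $v_0\in S_0$, Lemma \ref{lem1}(\ref{itemS0S0dist}) forces $d(w)>np/10$ for all $w\in N(v_0)$, so $w$ copies $v_0$ with probability at most $s/(np/10)$; summing over the at most $d(v_0)\le\om_0$ neighbours of $v_0$ and including the $1/n$ factor for the choice of death‑vertex, the probability that \emph{some} neighbour of $v_0$ joins $X$ in a given step is at most $\rho:=\dfrac{10s\om_0}{n\cdot np}=o(1/n)$.

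Finally I would bound $N$, the number of the first $\om_0^{3/4}$ iterations at which a neighbour of $v_0$ is added to $X$; note $N\le 1$ already implies both assertions of the lemma. Conditioning on a change occurring, $\Pr(\text{iteration }i\text{ adds a neighbour of }v_0\mid\cF_{i-1})\le\rho/\Pr(\text{change}\mid X_{i-1})$, and the crude lower bounds on $p_-^{\DB}$ (eq.~\eqref{102}) give $\Pr(\text{change}\mid X_{i-1})\gtrsim\max(1,|X_{i-1}|)/n$, so this conditional probability is $O\!\big(\om_0/(\max(1,|X_{i-1}|)\,np)\big)$. I would then split $\sum_i$ at the first iteration $i_0$ at which the process has ``escaped'' — $X=\es$, or $v_0\notin X$ for good, or $|X|\ge 20/\e^3$ — using the Death--Birth analogues of Lemmas \ref{n0}, \ref{link}, \ref{n1} (together with a Lemma \ref{earlyx}–style argument that $v_0$ does not re‑enter $X$ once it has permanently left) to get $i_0=O(\e^{-3})$ w.h.p. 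The iterations before $i_0$ contribute $\le O(\e^{-3})\cdot O(\om_0/np)=O(\om_0/(\e^3 np))=o(1)$; after $i_0$, either no further neighbour of $v_0$ can join (when $X=\es$ or $v_0$ has left for good), or $|X|$ grows at least linearly, $|X_i|\ge 20/\e^3+\Omega(i-i_0)$, by the Lemma \ref{link} bias estimate with $p_+^{\DB}/p_-^{\DB}\teps s$, so the tail contributes $O(\om_0/np)\cdot\sum_i 1/|X_i|=O(\om_0\log\om_0/np)=o(1)$. Hence $\E[N]=o(1)$, so w.h.p.\ $N=0$, which gives the lemma (with room to spare).

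I expect the main obstacle to be the ``no‑lingering'' input $i_0=O(\e^{-3})$: unlike the generic case, $v_0$'s low degree depresses the Death--Birth drift — from a state such as $\{v_0,u\}$ one only has $p_+^{\DB}/p_-^{\DB}=\Theta(s)$, which can be below $1$ when $s$ is close to $1$ — so one cannot merely quote a positive‑drift argument. The remedy is to observe that below a constant (depending on $s$) threshold on $|X|$ the walk, at each iteration, has constant probability of being absorbed at $0$, of shedding $v_0$ (after which $X$ behaves as in the generic case), or of crossing that threshold (above which the drift exceeds $1$); hence the process leaves this regime within $O(\e^{-3})$ iterations w.h.p., and $\e^{-3}\ll\om_0^{3/4}$.
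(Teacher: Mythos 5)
Your proposal takes a materially different and more elaborate route than the paper. The paper's proof is a two-line union bound: after the first iteration $X=\{v_0,v_1\}$ or $\emptyset$; per iteration the conditional chance of adding another $v_0$-neighbour is $O\bigl(\tfrac{d(v_0)-1}{(|X|-1)np+d(v_0)-1}\bigr)$, and a union bound over $\om_0^{3/4}$ iterations gives the claim. You add two ingredients the paper does not have: (i) a structural short-cycle argument (via Lemma~\ref{lem1}(\ref{itemCcapS1}) and the connectivity of $\wh X$) showing that any $v_0$-neighbour joining $X$ must copy $v_0$ and has $v_0$ as its only mutant neighbour --- this is correct and clean, but not strictly necessary for a probability bound; and (ii) you keep the $1/|X|$ factor in the per-iteration probability and sum it via a phase decomposition. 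Point (ii) is actually a genuine improvement over what the paper writes: replacing $|X|-1$ by its worst-case value $1$ as the paper does gives $\om_0^{3/4}\cdot O(\om_0/np)=O(\om_0^{7/4}/np)$, and with $\om_0=\e^2 np/(100\log np)$ and $np=\Theta(\log n)$ this is $\Theta\bigl((\log n)^{3/4}/\mathrm{polyloglog}\bigr)$, which does \emph{not} tend to $0$. The $|X|$-dependent decay you retain (in effect the content of Remark~\ref{rem1}) is what makes the sum $o(1)$, so your more careful bookkeeping fills in what the paper leaves implicit.

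The one genuine gap, which you flag yourself, is the no-lingering input $i_0=O(\e^{-3})$. For small $|X|$ with $v_0\in X$ and $d(v_0)$ tiny, the Death--Birth bias is $\teps s(|X|-1)/((s+1)|X|-s)$, which is below $1/2$ for $|X|<s/(s-1)$; so one cannot invoke Lemmas~\ref{link}/\ref{n0} directly in this regime, and your proposed workaround is a sketch rather than a proof. To close it cleanly one should bound the expected occupation time of the low states directly: the bias exceeds $1/2$ as soon as $|X|>s/(s-1)$ or $v_0\notin X$, so each state $j\le 20/\e^3$ is visited $O(1)$ times in expectation and the total contribution to $E[N]$ from the small-$|X|$ phase is $O(\e^{-3}\om_0/np)=o(1)$ as you want. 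Note the same issue is implicitly present in the paper's proof, which silently relies on a positive-drift argument in a regime where the drift is depressed. A last small point: you aim for $N=0$ w.h.p., stronger than the $N\le 1$ you correctly note is enough; this is fine, since from $X=\{v_0\}$ any growth step necessarily adds a $v_0$-neighbour, so $N=0$ just encodes that w.h.p.\ the process dies at the first iteration (probability $1-O(\om_0/np)$), and the bound $E[N]=o(1)$ covers the residual event.
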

\begin{proof}
After the first iteration either $X=\es$ or $X=\set{v_0,v_1}$ where $v_1\in N(v_0)$. As long as $|X|>1$, the chance of adding another neighbor of $v_0$ to $X$ is $O\bfrac{d(v_0)-1}{(|X|-1)np+d(v_0)-1}=O\bfrac{\e^{-2}}{np}$. So, the probability that $d_X(v_0)$ reaches 2 is $O(\e^{-2}\om_0^{3/4}/np)=o(1)$. The same calculation suffices for the second claim.
\end{proof}

\subsection{Bounds on $p_+,p_-$}
It follows from Lemma \ref{earlyx} that we only need to consider the case where (i) $|X|>\om_0^{1/2}$ or (ii) $|X|\leq\om_0^{1/2}$ and $X\cap S_1\subseteq \set{v_0}$ and $|N(X)\cap S_0|\leq 1$. 

{\bf Case DB1: $|X|\leq 20/\e^3$ and $|X_1|,|Y_0|\leq 1$.}\\
We remind the reader that $\wh X$ is connected and so w.h.p. if $v\in N(X)$ then $d_X(v)=1$, except possibly in one instance where $d_X(v)=2$. We write
\begin{align}
p_+&=\frac{s}{n}\brac{\sum_{v\in N(X)\sm Y_0}\frac{d_X(v)}{sd_X(v)+d_{\bar X}(v)}+\sum_{v\in N(X)\cap (S_1\sm S_0)}\frac{d_X(v)}{sd_X(v)+d_{\bar X}(v)}+\frac{d_X(Y_0)}{sd_X(Y_0)+d_{\bar X}(Y_0)}}\label{DB1eq1}\\
&\teps \frac{s}{n}\brac{\sum_{v\in N(X)\sm Y_0}\frac{1}{d_{\bar X}(v)}+\frac{|Y_0|}{sd_X(Y_0)+d_{\bar X}(Y_0)}}\nn\\
&=\frac{s}{n}\brac{\sum_{w\in X}\sum_{v\in N(w)\sm(X\cup Y_0)}\frac{1}{d_{\bar X}(v)}+\frac{|Y_0|}{sd_X(Y_0)+d_{\bar X}(Y_0)}}.\label{DB1eq1x}
\end{align}
Here we have used the fact that $d(v)\geq np/10$ and $np\gg|X|$ to remove $d_{X}(v)$  from the first two denominators. This is also used to remove the second summation in \eqref{DB1eq1}. So, separating $w\in X_1$ from the rest of $X$ we see that when $|X|>1$, (using Lemma \ref{lem1}\eqref{item7/eps^2}),
\begin{align}
p_+&\teps \frac{s}{n}\brac{|X|-|X_1|+\sum_{\substack{w\in X_1 \\ v\in N(w)\sm X}}\frac{1}{d_{\bar X}(v)}+\frac{|Y_0|}{sd_X(Y_0)+d_{\bar X}(Y_0)}}.\label{DB1a}\\
\noalign{When $|X|=1$ we have $p_-=1/n$ and when $|X|>1$}
p_-&\teps\frac{1}{n}\brac{|X|-|X_1|+\frac{d_{\bar X}(X_1)}{d_X(X_1)+d_{\bar X}(X_1)}}.\label{DB2a}
\end{align}
{\bf Case DB1a: $|X|=1$ and $X=\set{x}$:}
\begin{align}
p_+&\teps \frac{s}{n}\brac{\a+\frac{|Y_0|}{sd_X(Y_0)+d_{\bar X}(Y_0)}},\qquad\text{if $d(x)=\a np$ where $\a=\Omega(1)$}.\label{DB1aeq1}\\
p_+&\in \left[\frac{sd(x)}{5n^2p},\frac{10sd(x)}{n^2p}\right]\text{ if }d(x)=o(np).\label{DB1aeq2}\\
p_-&=\frac{1}{n}.\nn
\end{align}
{\bf Explanation for \eqref{DB1aeq1}, \eqref{DB1aeq2}:} 
Let $A=\sum_{v\in N(x)}1/d(v)$. This replaces the first sum in \eqref{DB1eq1x}. If $d(x)=\Omega(np)$ then Lemma \ref{lem1}\eqref{item7/eps^2} implies that $A\teps \a$. If $d(x)=o(np)$ then Lemma \ref{lem1}\eqref{itemS0S0dist} implies that $np/10\leq d(v)\leq 5np$ for $v\in N(x)$.

{\bf Case DB1b: $|X|>1$ and $X_1=Y_0=\emptyset$.}\\
It follows from \eqref{DB1a} that w.h.p.
\beq{DB1x}{
p_+\teps \frac{s|X|}{n}\text{ and }p_-\teps \frac{|X|}{n}.
}

{\bf Case DB1c: $|X|>1$ and $X_1=\set{x_1}$ and $d(x_1)=\a np\gg\e^{-2}$ and $Y_0=\es$.}\\ 
\beq{DB3}{
p_+\teps \frac{s(|X|-1+\a)}{n}\text{ and }p_-\teps \frac{|X|}{n}.
}
Here we have used Lemma \ref{lem1}\eqref{item7/eps^2} to replace the sum in \eqref{DB1a} by $\a$. (When we apply the lemma, the set $S$ will be the connected component of $X$ that contains $x_1$.)

{\bf Case DB1d: $|X|>1$ and $X_1=\set{x_1}$ and $d(x_1)=O(\e^{-2})$ and $Y_0=\es$.}\\ 
\beq{DB3a}{
p_+\teps \frac{s(|X|-1)}{n}\text{ and }p_-\teps\frac{1}{n}\brac{|X|-1+\frac{d(x_1)-\d_1}{s\d_1+d(x_1)-\d_1}}\sim\frac{|X|}{n},
}
where $\d_1=d_X(x_1)$. Note that Lemma \ref{earlyXX} implies that w.h.p. $x_1$ has at most one neighbor in $X$. We have used Lemma \ref{lem1}\eqref{itemS0S0dist} to remove the sum in \eqref{DB1a}.

{\bf Case DB1e: $|X|>1$ and $X_1=\es$ and $Y_0=\set{y_0}$.}\\ 
\beq{DB1c}{
p_+\teps \frac{s}{n}\brac{|X|+\frac{1}{d(y_0)+s-1}}\text{ and }p_-\teps \frac{|X|}{n}.
}
$d_X(y_0)=1$ follows from Lemma \ref{lem1}\eqref{itemCcapS1}, since $\wh X$ of Remark \ref{rem1} is connected.

{\bf Case DB1f: $|X|>1$ and $X_1=\set{v_0}$ and $Y_0=\set{y_0}$.}\\ 
In this case, if $d(v_0)=\a np$ then $\a=\Omega(1)$ since $v_0\notin S_0$, we have
\beq{DB1cx}{
p_+\teps \frac{s}{n}\brac{|X|-1+\a+\frac{1}{d(y_0)+s-1}}\text{ and }p_-\teps \frac{|X|}{n}.
}

{\bf Case DB2: $20/\e^3<|X|\leq n/(np)^{9/8}$.}\\
We assume first that $X$ induces a connected subgraph. Let $B(X)=\bigcup_{k\geq 2}B_k(X)$, where $B_k(X)=\set{v\notin X:d_X(v)=k}$, see Lemma \ref{lem1}(\ref{itemBk}). Then Lemma \ref{lem1}(\ref{e(S)3|S|}) implies that if $k>10$ then $|B_k|\leq a_k|X|$ where $a_k=10/(k-10)$. To see this, observe that if not then we can add $a_k|X|$ vertices to $X$ to make a set $Y$, such that $|Y|=(a_k+1)|X|\leq 2n/(np)^{9/8}$ with $e(Y)\geq ka_k|X|=10|Y|$, which contradicts Lemma \ref{lem1}(\ref{e(S)3|S|}).

Then, if $\wh N(X)=N(X)\setminus (B(X)\cup S_1)$ then from Lemma \ref{lem1}\eqref{iteme(S,T)}\eqref{item7/eps^2}\eqref{itemBk}and Remark \ref{rem1} (used to replace $|X|$ by $|\wh X|$ in one place,
\begin{align*}
|\wh N(X)|&\geq e(X\sm S_1,\bar X\sm S_1)-\sum_{k\geq 2}B_k(X)\\ 
&\geq (1-2\e)|X\sm S_1|(n-|X|-|S_1|)p-  \sum_{k=2}^{(np)^{1/3}}\frac{\e|X|np}{k^2}-\sum_{k=(np)^{1/3}}^{5np}\frac{10|X|}{k-10}\\
&\geq (1-3\e)|X|np-|\wh X\cap S_1|np-\frac{\e\p^2|X|np}{6}+(10+o(1))|X|\log(5np)\\
&\geq (1-4\e)|X|np.
\end{align*}
So,
\[
p_+\geq \frac1n\sum_{v\in\wh N(X)}\frac{s}{(1+\e)np}\gtrsim_\e \frac{s|X|}{n}\text{ and }p_-\leq \frac{|X|}{n}. 
\]
If $X$ induces components $C_1,C_2,\ldots,C_k$ then 
\mults{
p_+\geq \frac1n\sum_{v\in N(X)}\frac{s}{(1+\e)np}\geq \frac1n\sum_{i=1}^k\sum_{v\in N(C_i)}\frac{s}{(1+\e)np}\\ 
\geq \frac1n\sum_{i=1}^k\sum_{v\in \wh N(C_i)}\frac{s}{(1+\e)np}\gtrsim_\e\frac1n\sum_{i=1}^k\frac{s|C_i|}{(1+\e)np}=\frac{s|X|}{(1+\e)np}.
}
{\bf Case DB3: $n/(np)^{9/8}<|X|\leq n_1$.}\\
Let $D(X)=\set{v\in X:d_{\bar X}(v)\in (1\pm\e)(n-|X|)p}$. Then, using Lemma \ref{lem1}\eqref{item_max-degree}\eqref{itemsizeS1}\eqref{iteme(S,T)}\eqref{itemtheta},
\begin{align}
\sum_{v\in D(X)\sm S_{1}}\frac{d_{\bar X}(v)}{sd_{X}(v)+d_{\bar X}(v)}&\leq\sum_{v\in D(X)\sm S_1} \frac{d_{\bar X}(v)}{s((1-\e)np-(1+\e)(n-|X|)p)+(1-\e)(n-|X|)p}\nn\\
&\leq  \frac{e(D(X)\sm S_1,\bar X\sm S_1)+5np|S_1|}{(n+(s-1)|X|-((2s+1)n-(s+1)|X|)\e)p}\nn\\
&\leq \frac{(1+3\e)|X|(n-|X|)p}{(n+(s-1)|X|-((2s+1)n-(s+1)|X|)\e)p}.\label{bd1}\\
\sum_{X\cap S_1}\frac{d_{\bar X}(v)}{sd_{X}(v)+d_{\bar X}(v)}&\leq|X\cap S_1|\leq |S_1|\leq \frac{|X|}{np}.\label{bd2}\\
\sum_{X\sm(D(X)\cup S_1)}\frac{d_{\bar X}(v)}{sd_{X}(v)+d_{\bar X}(v)}&\leq\th|X|,\qquad\text{where }\th=\frac{1}{\e^2(np)^{1/2}}.\label{bd3}
\end{align}
The last inequality follows from Lemma \ref{lem1}(\ref{itemtheta}).

So we see that if $|X|=\xi n$ then after summing the above inequalities and simplifying, we see that 
\beq{DB4}{
p_-\lesssim_\e \frac{\xi(1-\xi)}{1+(s-1)\xi}.
}
We now look for a lower bound on $p_+$. 
\begin{align*}
p_+&\geq \frac{1}{n}\sum_{v\in N(X)\cap (D(\bar X)\sm S_1)}\frac{sd_X(v)}{(s-1)d_X(v)+(1+\e)np}\\
&\geq \frac{1}{n}\sum_{v\in N(X)\cap (D(\bar X)\sm S_1)}\frac{sd_X(v)}{(s-1)(1+\e)|X|p+(1+\e)np}\\
&\geq \frac{se(\bar X\sm S_1,X\sm S_1)-se(\bar X\sm D(\bar X),X)}{(s-1)(1+\e)|X|p+(1+\e)np}\\
&\geq \frac{s(1-2\e)|X|(n-|X|)p-se(\bar X\sm D(\bar X),X)}{(s-1)(1+\e)|X|p+(1+\e)np},\qquad\text{ from Lemma \ref{lem1}\eqref{iteme(S,T)}}.
\end{align*}
With $\a=1/(np)^{1/4}$ and $\th=1/\e^2(np)^{1/2}$,
\[
e(\bar X\sm D(\bar X),X)\leq \begin{cases}\a\th|X|(n-|X|)p&\frac{n}{(np)^{9/8}}\leq |\bar X|\leq  \frac{n}{(np)^{1/3}}, \quad\text{Lemma \ref{lem1}\eqref{itemtheta}\eqref{itemSTD} applied to }S=\bar X.\\5\th(n-|X|)np&\frac{n}{(np)^{1/3}}\leq |\bar X|\leq n_1,\quad\text{Lemma \ref{lem1}\eqref{itemtheta}\eqref{item_max-degree} applied to }S=\bar X.\end{cases}
\]
It follows from this that in both cases $e(\bar X\sm D(\bar X),X)\leq \e|X|(n-|X|)p$. So,
\begin{align}
p_+&\geq \frac{s(1-3\e)\xi(1-\xi)}{(s-1)(1+\e)\xi+1+\e}\\
&\gtrsim_\e \frac{s\xi(1-\xi)}{(s-1)\xi+1}.\label{DB4+}
\end{align}

In which case
\[
\frac{p_+}{p_-}\gtrsim_\e\frac{s\xi(1-\xi)}{(s-1)\xi+1}\cdot\frac{1+(s-1)\xi}{\xi(1-\xi)}= s.
\]

\subsection{Fixation probability}\label{FPT}

We first prove the equivalent of Lemma \ref{n1}.
\begin{lemma}\label{n1a}
If $|X|$ reaches $\om$, where $\om\to\infty$ then w.h.p. $X$ reaches $[n]$.
\end{lemma}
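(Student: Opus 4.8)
The plan is to mimic the structure of the Birth-Death argument in Lemma~\ref{n1}, splitting the trajectory of $|X|$ into two regimes: a ``small/intermediate'' regime $\om \le |X| \le n_1$ and an ``endgame'' regime $n_1 \le |X| \le n$. In the first regime I would apply Lemma~\ref{link} to the walk $Z_t = |X|$ at iteration $t$: by Cases DB2 and DB3 (equations \eqref{DB4+} and the displayed bound $p_+/p_- \gtrsim_\e s$), as long as $|X| > \max\set{\om/2, \r t}$ the one-step bias satisfies $\Pr(Z_{t+1}=Z_t+1) \ge \g$ for some fixed $\g > 1/2$ (taking $\e$ small enough that $s(1-O(\e)) > 1$ gives such a $\g$). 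Hence Lemma~\ref{link} with $a \asymp \om$ and $b = n_1$ shows that with probability $1 - O(e^{-\Omega(\om)}) = 1-o(1)$, $|X|$ reaches $n_1$ before returning to $\om/2$ (or to $0$). Note one must first check the hypothesis $Z_0 = 2a \ge 2\r t_0$ is met; since we only invoke this after $|X|$ has first climbed to $\om$, and by the Gambler's Ruin duration bound \eqref{dur} the number of iterations used to reach $\om$ is $O(\om)$, this holds for $\r$ small.

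For the endgame, I would argue exactly as in Lemma~\ref{n1}. Once $|X| = n_1$, set $n_2 = n - n_1 = n/(np)^{1/2}$. From the crude bounds on $p_+$ for $|X| \ge n_1$ (the Death-Birth analogue of Case BD3: here $p_+ \ge \frac1n \sum_{v \in N(X)} \frac{s d_X(v)}{s d_X(v) + d_{\bar X}(v)} \ge \frac{s|N(X)|}{5n \cdot np}$ using $\D \le 5np$, while $p_- \le |X|/n \le 1$), there is a probability at least $\eta = (5np \cdot n)^{-n_2} \cdot (\text{const})$, say $\eta = (cnp)^{-n_2}$, that $X$ reaches $[n]$ in the next $n_2$ steps without dropping below $n_1$. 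Meanwhile, whenever the walk leaves $n_1$ going downward, it can only re-enter the Case DB2/DB3 regime, where by \eqref{DB4+} the downward-excursion probability of reaching $20/\e^3$ before returning to $n_1$ is at most $(1-\z)^{n_1}$ for a suitable constant $\z$. Setting $m = \eta^{-1}\log n$, we have $m\eta \to \infty$ and $m(1-\z)^{n_1} \to 0$, so w.h.p. among the first $m$ returns to $n_1$ one of them triggers the successful run to $[n]$, and no excursion ever descends to $20/\e^3$; the failure probability is $o(1) + (1-\eta)^{m/2} = o(1)$, where the first term bounds the chance that fewer than $m/2$ of the $m$ departures from $n_1$ are upward.

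The main obstacle, as in the BD case, is the coupling/bookkeeping in the intermediate regime: the one-step bias bounds in Cases DB2 and DB3 are only valid under side conditions (e.g. $|X| \ge \e t$, and the $S_1$-containment bounds of Section~\ref{eiii}, i.e. the analogue of \eqref{dam}), so one must confirm that whenever Lemma~\ref{link} is invoked the walk genuinely has $Z_t \ge \r t \ge \e t$ and that $|(X \cup N(X)) \cap S_1|$ stays within the required bounds --- this is exactly what Lemma~\ref{earlyxx} (plus Lemma~\ref{lem1}\eqref{item7/eps^2}) provides while $|X| \le n/(np)^{9/8}$, and for larger $|X|$ the $|S_1|$-terms are swallowed using Lemma~\ref{lem1}\eqref{itemsizeS1} as in Case DB3. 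I would state these verifications briefly and otherwise refer back to the parallel argument in Lemma~\ref{n1}.
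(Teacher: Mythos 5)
Your proposal is correct and follows essentially the same two-regime structure as the paper's proof: a bias-driven climb from $\om$ to $n_1$, followed by an amplification argument over repeated visits to $n_1$. The one genuine (if minor) divergence is in the endgame regime $|X|\geq n_1$: you derive the crude bounds $p_+^{DB}\geq s|N(X)|/(5n\cdot np)$ and $p_-^{DB}\leq |X|/n$ directly from \eqref{101}, \eqref{102} and $\Delta\leq 5np$, whereas the paper gets there by the slicker observation that $p_+^{DB}(X)\geq p_-^{BD}(X)$ and $p_-^{DB}(X)\leq p_+^{BD}(X)$ (valid for $s\geq 1$ since $w(X)\geq n$ and $sd_X(v)+d_{\bar X}(v)\leq sd(v)$), which lets it import the bound $p_+^{DB}/p_-^{DB}\geq 1/(5snp)$ wholesale from Case BD3 without redoing any estimates. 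The two routes give the same $\eta$ up to constants, so there is no difference in strength; the paper's comparison trick is just a bit more economical and avoids re-examining the DB formulas. For the intermediate regime you reach for Lemma~\ref{link}, whereas the paper invokes \eqref{abs} directly; since the DB cases do not carry the explicit $|X|\geq\e t$ side condition that forces Lemma~\ref{link} in the BD proof of Lemma~\ref{n1}, your appeal to Lemma~\ref{link} is a harmless overkill, and your care in checking its hypotheses is reasonable given the implicit dependence on $\wh X$ in Case~DB2.
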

\begin{proof}
We first show that if $|X|$ reaches $\om$ then w.h.p. $|X|$ reaches $n_1=n-n/(np)^{1/2}$. Let $a=\om/2$ and $m=n_1-a$. There is a positive bias of $\teps s$ in Cases DB2, DB3 as long as $|X|>a$. It follows from \eqref{abs} that the probability $|X|$ ever reaches $a$ before reaching $m$ is $o(1)$.

Now consider the case of $|X|\geq n_1$. Comparing \eqref{2} and \eqref{101} we see that $p_+^{DB}(X)\geq p_-^{BD}(X)$.  Comparing \eqref{1} and \eqref{102} we see that  $p_-^{DB}(X)\leq P_+^{BD}(X)$ . By comparing this with Case BD3 of Section \ref{pp}, we see that this implies that $p_+^{DB}(X)/p_-^{DB}(X)\geq 1/(5snp)$. Now consider the experiment described in Lemma \ref{n1}. Beginning with $|X|=n_1$, we still have a probability of at most $(1-\z)^{n_1}$ of $|X|$ reaching 0 before returning to $n_1$. Now there is a probability of at least $\eta=(5snp)s^{-n/(np)^{1/2}}$ of $|X|$ reaching $n$ before returning to $n_1$. 
\end{proof}
\subsubsection{Case analysis}
We consider the following cases:\\
{\bf Case DBF1: $v_0\notin S_1$ and $Y_0=\es$.}\\
In this case Lemmas \ref{earlyxx} and \ref{earlyx} imply that we remain in Case DB1a or DB1b while $|X|\leq \om_0^{1/2}$ and there is a bias to the right $p_+/p_-\teps s$. (Lemma \ref{earlyx} also implies that $X\cap S_0$ remains empty. In this case, before adding to $X\cap S_0$ we must add to $N(X)\cap S_0$.) Remark \ref{rem1} implies that w.h.p. we either reach $|X|=0$ or $|X|=\om_0^{1/2}$ within $O(\om_0^{1/2}\log\om_0)$ iterations. If $|X|$ reaches $\om_0^{1/2}$ then Lemma \ref{n1a} implies that w.h.p. $X$ eventually reaches $[n]$.
Consequently $\f\teps (s-1)/s$. This proves part (a) of Theorem \ref{th2}.

{\bf Case DBF2: $v_0\in S_1, Y_0=\es$ and $d(v_0)=\a np$ where $\a np\gg \e^{-2}$.}\\
In this case we remain in Case DB1a or DB1c while $|X|\leq \om_0^{1/2}$ as long as $v_0$ is not removed from $X$. If $d(v_0)=\a np$ then the probability that this happens, conditional on a change in $X$, is $\teps s/((s+1)|X|-1+\a)$. There are $|X|$ chances of about $s/n$ of choosing $v\in X$. Then for each $w\in X\sm\set{v_0}$ there is a chance of about $1/n$ that $v$ is a neighbor of $w$ and that $v$ chooses $w$ as $u$. This leads to \eqref{pj} with $\eta_j=s/((s+1)j-1+\a)$ and gives $p_j(DBF2)$. 

{\bf Case DBF3: $v_0\in S_1, Y_0=\es$ and $d(v_0)=O(\e^{-2})$.}\\
In this case the term $\psi(\d_1)=\frac{d(x_1)-\d_1}{s\d_1+d(x_1)-\d_1}$ in \eqref{DB3a}, where $\d_1=d_X(x_1)$,  may become significant. It is only significant while $v_0\in X$ and $|X|\leq \om_0^{1/2}$. In which case $\d_1$ is either 0 or 1. If $\d_1=1$ and $|X|=j\geq 2$ then conditional on $|X|$ decreasing, $\d_1$ becomes 0 with asymptotic probability $1/j$. If $\d_1=0$ and $|X|=j\geq 2$ then $\d_1$ becomes 1 with asymptotic probability 0. If $|X|=1$ and $X=\set{v_0}$ then $\d_1$ becomes 1 if and only if $X$ does not become empty after the next iteration. This leads to the following recurrence: let $p_{j,\d}$ be the (asymptotic) probability of $|X|$ becoming 0 starting from $|X|=j$ and $\d_1=\d$. Then we have $p_{0,0}=p_{0,1}=1$ and $p_{m,\d}=0$ for $m=\om_0^{1/2}$ and $\d=0$ or 1. The recurrence is
\begin{align*}
p_{j,0}&=\g_jp_{j+1,0}+(1-\g_j)(1-\eta_j)p_{j-1,0}+(1-\g_j)\eta_jq_j.\\
p_{j,1}&=\g_jp_{j+1,1}+(1-\g_j)(1-\eta_j)(1-\th_j)p_{j-1,1}+(1-\g_j)(1-\eta_j)\th_jp_{j-1,0}+(1-\g_j)\eta_jq_j.
\end{align*}
Here $\g_j=(s(j-1))/(s(j-1)+j-1+\psi(\d))$ is asymptotic to the probability that $j=|X|$ increases, $\eta_j=s/(sj+j-1)$ is asymptotic to the probability that $v_0$ leaves $X$ and $\th_j=\eta_j$ is asymptotic to the probabilty that $v_0$'s neighbor in $X$ leaves $X$. (We have $\a=1$ in the definition of $\eta_j$, since $v_0\notin S_1$.) $\eta_j,q_j$ are as in \eqref{pj}. This, with the previous case,  establishes part (b) of Theorem \ref{th2}.

{\bf Case DBF4: $v_0\notin S_1$ and $Y_0=\set{y_0}$:}\\
If $|X|=1$ then \eqref{DB1aeq1} implies that $np_+\teps s(1+d(y_0)/(s+d(y_0)-1)$ and $np_-=1$ and if $|X|>1$ then we (i) either remain in Case DB1e while $|X|\leq \om_0^{1/2}$ or (ii) $y_0$ moves to $X$ and we are in Case DB1c or DB1d, depending on $d(y_0)$. The probability that we switch from (i) to (ii) is asymptotically equal to $\xi_j=d(y_0)/(np(j-1)+d(y_0))$, where $j=|X|$. The recurrence for $p_j$ is
\[
p_j=\l_j(1-\xi_j)p_{j+1}+\l_j\xi_j\psi_{j}+(1-\l_j)(1-\eta_j)p_{j-1}+(1-\l_j)\eta_jq_j
\]
where $\l_j=(s(j+1/(d(y_0)+s-1)))/((s(j+1/d(y_0)+s-1)+j)$ (We have $\a=1$ in the definition of $\eta_j$ in \eqref{pj}, since $v_0\notin S_1$.) We have $\psi_j=p_j(DBF2)$ if $d(y_0)\gg\e^{-2}$ and $\psi_j=p_{j,1}$ if $d(y_0)=O(\e^{-2})$.  

{\bf Case DBF5: $v_0\in S_1$ and $Y_0=\set{y_0}$:}\\
In this case we begin in Case DB1a with $d(v_0)=\a np$ where $\a=\Omega(1)$. Then we stay in Case DB1f while $|X|\leq \om_0^{1/2}$, unless $v_0$ leaves $X$. In which case we move to Case DB1e. The recurrence for $p_j$ is
\[
p_j=(1-\m_j)p_{j+1}+\m_j(1-\eta_j)p_{j-1}+\m_j\eta_jq_j
\]
where $\m_j=(s(\a+1/(d(y_0)+s-1)))/((s(j-1+\a+1/d(y_0)+s-1)+j)$ is asymptotically equal to the probability that $v_0$ leaves $X$ given that $|X|$ decreases and $\eta_j$ is as in \eqref{pj}. This establishes part (c) of Theorem \ref{th2}.

We see from the above cases that when $|X|$ is small the chance that $X$ reaches $\om_0^{1/2}$ yields (a), (b) of Theorem \ref{th2}, because if $|X|$ reaches $\om_0$ then there is a positive rightward bias and $X$ will w.h.p. eventually become $[n]$. 

\paragraph{The case $s\leq1$}
The above analysis holds for $s>1$. For $s\leq 1$ we go back to the case where $|X|\leq \om_0$. If $s<1$ then we see from \eqref{DB1x} -- \eqref{DB1c} that there are constants $C_1>0,0<C_2<1$ such that if $|X|\geq C_1$ then $p_+/p_-\leq C_2$. It follows that w.h.p. $|X|$ will return to $C_2$ before it reaches $\om_0^{1/2}$ and then there is a probability bounded away from 0 that $|X|$ will go directly to 0. 
\paragraph{The case $s=1$}
It follows from Maciejewski \cite{Mac} that the fixation probability of vertex $v$ is precisely $\p(v)=d(v)/\sum_{w\in [n]}d(w)$.  In a random graph with $np=\Omega(\log n)$ this gives $\max_v\p(v) =O(1/n)$.
\subsection{$np\gg\log n$ and $s>1$}\label{npgglogn}
When $np\gg\log n$ then $S_1=\es$ and and all but (\ref{e(S)3|S|}), (\ref{iteme(S)1|S|}), (\ref{itemBk}), \eqref{itemtheta} of Lemma \ref{lem1} hold trivially. Now \eqref{e(S)3|S|} and  (\ref{itemBk}) are used in bounding $e(X:\bar X)$ and are not therefore needed. \eqref{iteme(S)1|S|} is not used in Death-Birth. The proof  of \eqref{itemtheta} does not need $np=O(\log n)$. There is always a bias close to $s$ and the fixation probability is asymptotic to $\tfrac{s-1}{s}$.

\appendix
\section{Proof of Lemma \ref{lem1}}\label{A}
\eqref{item_max-degree}  The degree $d(v)$ of vertex $v\in [n]$ is distributed as $Bin(n-1,p)$. The Chernoff bound \eqref{ch3} implies that 
\[
\Pr(\D>5np)\leq n\Pr(Bin(n,p)\geq 5np)\leq n\bfrac{e}{5}^{5np}=o(1).
\]
\eqref{itemsizeS1} 
We first observe that the Chernoff bounds \eqref{ch1}, \eqref{ch2} imply that
\beq{eps}{
\Pr(B(n,p)\notin I_\e)=\sum_{i\notin I_\e}\binom{n}{i}p^i\brac{1-p}^{n-i}\leq e^{-\e^2np/(3+o(1))}.
}
The degree $d(v)$ of vertex $v\in [n]$ is distributed as $Bin(n-1,p)$. So, 
\[
\E(|S_1|)=n\Pr(d(1)\notin I_\e)\leq ne^{-\e^2(n-1)p/(3+o(1))} =n^{1-\e^2/(3+o(1))}.
\]
Now use the Markov inequality to obtain the upper bound. 

\eqref{itemCcapS1}
\begin{align*}
\Pr(\exists v\in S_1\cap C:\neg \eqref{itemCcapS1}))&\leq \sum_{k=3}^{\om_0}k\binom{n}{k}k!p^k\Pr(B(n-3,p)\notin I_\e-2)\\
&\leq 2(np)^{\om_0}e^{-\e^2np/(3+o(1))}\\
&=o(1).
\end{align*}
{\bf Explanation:} we sum over possible choices for a $k$-cycle $C$ of $K_n$. There are less than $\binom{n}{k}k!$ $k$-cycles in $K_n$. There are $k$ choices for a vertex of $C\cap S_1$. Given a cycle $C$ and $v\in C$ we multiply by the probability that the edges of $C$ exist in $G_{n,p}$ and that $(d_{\bar C}(v)+2)\notin I_\e$.

\eqref{itemS0S0dist} 
\mults{
\Pr(\exists v,w:\neg\eqref{itemS0S0dist})\leq \binom{n}{2}\sum_{k=1}^{\om_0-1}\binom{n-2}{k}k!p^{k+1}\brac{\sum_{i=0}^{np/10}\binom{n-k-2}{i}p^i\brac{1-p}^{n-k-2-i}}^2\leq\\ n(np)^{\om_0+1}\brac{\sum_{i=0}^{np/10}\bfrac{nep}{i(1-p)}^ie^{-np}}^2\leq n(np)^{\om_0+1} (2(10e)^{np/10}e^{-np})^2\leq n^{1+1/10+2/3-2+o(1)}=o(1).
}
{\bf Explanation:} we sum over pairs of vertices $x,y$ and paths $P$ of length $k<\om_0$ joining $x,y$. Then we multiply by the probability that these paths exist and then by the probability that $x,y$ have few neighbors outside $P$. 

\eqref{itemS0S1dist}
\mults{
\Pr(\exists x,y:\neg\eqref{itemS0S1dist})\leq n^2\sum_{\ell=1}^{\om_0}n^{\ell-1} p^\ell e^{-\e^2np/4}\sum_{i=0}^{\om_0}\binom{n-\ell-2}{i}p^i(1-p)^{n-\ell-2-i}\\
\leq 2n(np)^{2\om_0+1}e^{-(\e^2np/4+np)}=o(1).
}
{\bf Explanation:} we use a similar analysis as for property \eqref{itemS0S0dist}. The factor $e^{-\e^2np/4}$ bounds the probability that $v$ has between $(1-\e)np-\om_0$ and $(1+\e)np$ neighbors outside the chosen path vertices, see \eqref{eps}. The sum over $i$ bounds the probability that $w$ has fewer than $\om_0$ neighbors outside $v$ and the path.

\eqref{e(S)3|S|}
\[
\Pr(\exists S:\neg\eqref{e(S)3|S|})\leq \sum_{s=20}^{2n/(np)^{9/8}}\binom{n}{s}\binom{\binom{s}{2}}{10s}p^{10s}\leq \sum_{s=20}^{2n/(np)^{9/8}}e \brac{\bfrac{s}{n}^{9}\bfrac{enp}{20}^{10}}^s=o(1).
\]
{\bf Explanation:} we choose a set of size $s$ and bound the probability it has $10s$ edges by the expected number of sets of $10s$ edges that it contains. The final claim uses that fact that $np=O(\log n)$.

\eqref{iteme(S)1|S|}
\[
\Pr(\exists S:\neg\eqref{iteme(S)1|S|})\leq\sum_{s=4}^{2\om_0}\binom{n}{s}\binom{\binom{s}{2}}{s+1}p^{s+1}\leq \om_0ep\sum_{s=4}^{2\om_0}\bfrac{neps}{2}^s=o(1).
\]
We use a similar analysis as for property \eqref{e(S)3|S|}. The final claim also uses that fact that $np=O(\log n)$, in which case $(nep\om_0)^{2\om_0}\leq n^{o(1)}$. 

\eqref{iteme(S,T)}
At least one of $S,T$ has size at most $n/2$ and assume it is $S$. Suppose first that $S$ induces a connected subgraph. Suppose that $|S|\leq n/(np)^{9/8}$. We first note that
\[
n-|T|\leq \frac{n}{(np)^{9/8}}+n^{1-\e^2/4}\leq \e^2|T|.
\]
Then we have
\[
e(S:T)\leq (1+\e)|S|np=(1+\e)|S|\,|T|p+(1+\e)|S|(n-|T|)p\leq (1+\e)|S|\,|T|p(1+\e^2)\leq (1+2\e)|S|\,|T|p.
\]
On the other hand, Lemma \ref{lem1}\eqref{e(S)3|S|} implies that 
\[
e(S:T)\geq (1-\e)|S|np-20|S|\geq (1-2\e)|S|np\geq (1-2\e)|S|\,|T|p.
\]
So now assume that $n/(np)^{9/8}\leq |S|\leq n/2$. Let $\wh I_{(d)}=[n/(np)^{9/8},n/2]$. Fix $S_1$ and all edges incident with $S_1$. Then, where $m$ stands for $|S_1|$ and $n_\e=n^{1-\e^2/4}$,
\begin{align}
&\Pr\brac{\exists |S|\in {\wh I_{(d)}},e(S:T)\leq (1-\e)|S|\,|T|p} \nn\\
&\leq \sum_{s\in {\wh I_{(d)}}}\binom{n-m}{s}s^{s-2}p^{s-1}e^{-\e^2s(n-s-m)p/3}\label{ddd}\\
&\leq  \sum_{s\in {\wh I_{(d)}}}\binom{n-m}{s}s^{s-2}p^{s-1}e^{-\e^2snp/7}\nn\\
&\leq \frac{1}{s^2p}\sum_{s\in {\wh I_{(d)}}}e^{-\e^2nps/8}\nn\\
&=o(1).\nn
\end{align}
{\bf Explanation of \eqref{ddd}:} Given $s$ there are $\binom{n}{s}$ choices for $S$, $s^{s-2}$ choices for a spanning tree $T$ of $S$. The factor $p^{s-1}$ accounts for the probability that $T$ exists in $G_{n,p}$ and then the final factor $e^{-\e^2s(n-s-m)p/3}$ comes from using the Chernoff bounds to bound the probability of the event $\cA$ that $e(S:T)\leq (1-\e)|S|\,|T|p$, since $e(S:\bar S)$ is distributed as $Bin(s(n-s),p)$. These are computed conditional on the event $\cB$ that each $v\in S$ having a lower bound on its degree. Because $\cA$ is a monotone decreasing event and $\cB$ is a monotone increasing event, we can apply the FKG inequality to argue that $\Pr(\cA\mid\cB)\leq \Pr(\cA)$. (The use of the FKG inequality, or rather Harris's inequality is explained in \cite{FK}, Section 26.3.)

When it comes to estimating $\Pr\brac{\exists |S|\in {\wh I_{(d)}},e(S:T)\geq (1+\e)|S|\,|T|p}$ we apply a similar argument, but this time when we apply the FKG inequality we use the fact that each vertex has an upper bound on its degree.

We now deal with the connectivity assumption. Suppose now that $S$ has a component $C$ of size less than $10/\e^3$. Then, using Lemma \ref{lem1}\eqref{iteme(S)1|S|}, we see that w.h.p. $|N(C)|\geq d(C)-2|C|\geq (1-2\e)|C|np$ since $S\cap S_1=\es$. Clearly $|N(C)|\leq (1+2\e)|C|np$, since $C\cap S_1=\es$. So, $S$ will inherit the required property from its components. Indeed, if the components of $S$ are $C_1,C_2,\ldots,C_k$ then because there are no edges between components, 
\[
e(S:T)=\sum_{i=1}^ke(C_i:T)\geq \sum_{i=1}^k(1-2\e)|C_i||T|p=(1-\e)|S||T|p.
\]
The upper bound on $e(S:T)$ is proved in the same way.

\eqref{iteme(SSbar)}
\[
\Pr(\exists S:e(S:\bar S)\leq |S|np/2)\leq \sum_{s=\om_0/2}^{n/(np)^{9/8}}\binom{n}{s}s^{s-2}p^{s-1}e^{-s(n-s)p/3}\leq \frac{1}{s^2p}\sum_{s=\om_0/2}^{n/(np)^{9/8}}\brac{e^{1-np/4}np}^s=o(1).
\]
{\bf Explanation:} the sum bounds the expected number of spanning trees in components $S$ for which $e(S:\bar S)\leq |S|np/2$.

\eqref{item7/eps^2}
Suppose first that $|S|\leq \om_0$. Let $n_0=(5np+1)\om_0$ be an upper bound on $|S\cup N(S)|$ and let $s_0=7/\e^2$. 
Then, using \eqref{item_max-degree}, 
\begin{align*}
\Pr(\exists S:\neg\eqref{item7/eps^2})&\leq o(1)+\sum_{s=1}^{n_0}\binom{n}{s}s^{s-2}p^{s-1}\binom{s}{s_0}\binom{5s_0np}{s_0}e^{-s_0\e^2np/(3+o(1))}\nn\\
&\leq o(1)+\frac{n}{s^2p}\sum_{s=1}^{n_0}(enp)^s\bfrac{5snpe^{2-\e^2np/(3+o(1))}}{s_0^2}^{s_0}=o(1).
\end{align*}
{\bf Explanation:} here $\binom{s}{s_0}\binom{5s_0np}{s_0}$ bounds the number of choices for up to $7/\e^2$ vertices in $(S\cup N(S))\cap S_1$.

When $|S|>\om_0$ we replace $s_0$ by $s_1=\rdup{7s/\e^2\om_0}$ to obtain
\begin{align*}
\Pr(\exists S:\neg\eqref{item7/eps^2})&\leq\sum_{s=\om_0}^{n_0}\binom{n}{s}s^{s-2}p^{s-1}\binom{s}{s_1}\binom{5s_1np}{s_1}e^{-s_1\e^2np/(3+o(1))}\nn\\
&\leq \frac{n}{s^2p}\sum_{s=\om_0}^{n_0}(enp)^s\bfrac{5snpe^{2-\e^2np/(3+o(1))}}{s_1}^{s_1}=o(1).
\end{align*}
\eqref{itemBk} We use $\binom{s}{k}p^k$ to bound the probability that $v\in B_k(S)$.
\begin{align*}
\Pr(\exists S:\neg \eqref{itemBk})&\leq \sum_{k=2}^{(np)^{1/3}}\sum_{s=k}^{n/(np)^{9/8}}\binom{n}{s}s^{s-2}p^{s-1}\binom{n-s}{\a_ksnp}\brac{\binom{s}{k}p^k}^{\a_ksnp}\\
&\leq \sum_{k=2}^{(np)^{1/3}}\frac{1}{s^2p}\sum_{s=k}^{n/(np)^{9/8}}\brac{(enp)\cdot\bfrac{e^{k+1}(sp)^{k-1}}{k^k\a_k}^{\e np/k^2}}^s=o(1).
\end{align*}
\eqref{itemtheta} We can assume that $S$ induces a connected subgraph and then sum the contributions from each component. We first consider the case where $|S|\leq n/2$.
\begin{align*}
\Pr(\exists S:\neg\eqref{itemtheta} )&\leq\sum_{s=n/(np)^2}^{n/2}\binom{n}{s}s^{s-2}p^{s-1}\binom{s}{\th s}(2e^{-\e^2(n-s)p/3})^{\th s}\\
&\leq \frac{1}{p}\sum_{s=n/(np)^2}^{n/2}\brac{nep \bfrac{2e^{1-\e^2(n-s)p/3}}{\th}^{\th} }^s=o(1).
\end{align*}
When $n/2<|S|\leq n_1$ we drop the connectivity constraint and replace $\binom{n}{s}$ by $4^s$. The summand is then equal to  $\brac{4e(2e^{-\e^2(n-s)p/3}/\th)^{\th}}^s$. 

\eqref{itemSTD}
Here $\a=(np)^{1/4}$.
\begin{align*}
\Pr(\exists S:\neg\eqref{itemSTD})&\leq\sum_{s=n/(np)^{9/8}}^{n/(np)^{1/3}}\binom{n}{s}\binom{n}{\th(n-s)}\binom{\th s(n-s)}{\a\th s(n-s)p}p^{\a\th s(n-s)p}\\
&\leq \sum_{s=n/(np)^{9/8}}^{n/(np)^{1/3}}\brac{\frac{ne}{s}\cdot \bfrac{e}{\a}^{\a\th(n-s)p/2}}^s\brac{\frac{ne}{\th(n-s)}\cdot \bfrac{e}{\a}^{\a sp/2}}^{\th(n-s)}=o(1).
\end{align*}

\eqref{itemSvd}

Let $\s=\e^{-2}\log\log n$.
\beq{lastone}{
\Pr(\exists S,v:\neg\eqref{itemSvd})\leq n\sum_{s=1}^{np}\binom{n}{s}s^{s-2}p^{s-1}\binom{s}{\s}p^{\s}\leq \sum_{s=1}^{np}n^{s+1}e^s2^sp^{s+\s-1}\leq 2n^{np+1}(2e)^{np}p^{np+\s-1}.
}
{\bf Explanation:} $\binom{s}{\s}$ chooses the neighbors of $v$ in $S$. The last inequality follows from $np\gg1$.

Now suppose that $np=c\log n$. Then,
\mults{
\log(RHS\eqref{lastone})=np\log(np)+\log n+(\s-1)\log p\\
\leq (c+1)\log n\log\log n-(\s-1)(\log n-O(\log\log n))\to-\infty.
}
\end{document}